\newtheorem{theorem}{Theorem}[section]
\newtheorem{lemma}[theorem]{Lemma}
\theoremstyle{definition}
\newtheorem{condition}[theorem]{Condition}
\newtheorem{proposition}[theorem]{Proposition}
\numberwithin{equation}{section}
\newcommand{\spin}{\mathrm{Spin}}
\newcommand{\trace}{\mathrm{tr}}
\newcommand{\ricci}{\mathrm{Ric}}
\title{Non-existence of cohomogeneity one Einstein metrics of two summands}
\author{Hanci Chi}
\address{Department of Foundational Mathematics\\ Xi'an Jiaotong-Liverpool University\\ Suzhou 215123\\ China}
\email{hanci.chi@xjtlu.edu.cn}
\begin{document}
\maketitle
\begin{abstract}
We prove the non-existence of cohomogeneity one Einstein metrics on a class of compact manifolds arising as double disk bundles, whose principal orbits split into two inequivalent irreducible summands. The proof uses a phase space barrier argument that yields a new obstruction to the existence of closed cohomogeneity one Einstein metrics, even when the principal orbits admit homogeneous Einstein metrics.
\end{abstract}
\let\thefootnote\relax\footnote{2020 Mathematics Subject Classification: 53C25 (primary).

Keywords: Einstein metric, cohomogeneity one metric.

The author is supported by the NSFC (No. 12301078), the Natural Science Foundation of Jiangsu Province (BK-20220282), and the XJTLU Research Development Funding (RDF-21-02-083).}

\section{Introduction}
A Riemannian metric $g$ is Einstein if $\ricci(g)=\Lambda g$ for some constant $\Lambda$. In dimension 4, it is well known that there are topological obstruction to Einstein metrics, see \cite{hitchin1974compact, thorpe1969some}. Seiberg--Witten theory yields further non‑existence theorem on symplectic 4‑manifolds that is not covered by the above theorems \cite{lebrun1996four}. In higher dimensions, while no general topological obstructions to Einstein metrics are known, certain geometric settings impose significant constraints. For instance, the existence of positive Kähler–Einstein metrics is tightly linked to stability conditions; see \cite{matsushima1957structure,lichnerowicz1958geometrie, futaki1983obstruction, tian1987kahler, tian2015k, chen2015kahler}.

Several non-existence criteria have been established in the context of manifolds with low cohomogeneity. For homogeneous Einstein metrics, see \cite{wang1986existence, park1997invariant, bohm2005non}. The non-existence theorem in \cite{wang1986existence} was later brought to the cohomogeneity one setting in \cite{bohm_inhomogeneous_1998}, and further generalized in \cite{bohm1999non}. These results emphasize the sign of the traceless part of the homogeneous Ricci tensor of the singular orbit, and yield infinite families of cohomogeneity one manifolds that do not admit any $\mathsf{G}$-invariant Einstein metrics. Necessarily, the principal orbits themselves do not admit any invariant Einstein metrics.

In this paper, we explore an alternative generalization to B\"ohm's non-existence theorem: we study the case where the principal orbit $\mathsf{G}/\mathsf{K}$ \emph{does} admit invariant Einstein metrics, and yet the global cohomogeneity one manifold fails to support any Einstein metric. Our setting follows that of \cite{chi2024existence}, where the principal orbit is the total space of a homogeneous fibration
\begin{equation}
\label{eqn_homogeneous fibration}
\mathbb{S}^{d_1}=\mathsf{H}/\mathsf{K}\hookrightarrow \mathsf{G}/\mathsf{K}\rightarrow \mathsf{G}/\mathsf{H}=\mathcal{Q}^{d_2},
\end{equation}
with both $\mathbb{S}^{d_1}$ and $\mathcal{Q}^{d_2}$ are irreducible. The cohomogeneity one Einstein equation on the associated double disk bundle $M$ is a a system of ODEs characterized by the structural triple $(d_1,d_2,A)$. Here, the constant $A\geq 0$ quantifies the gap between $\mathsf{G}/\mathsf{K}$ and the product space $\mathbb{S}^{d_1}\times \mathcal{Q}^{d_2}$ \cite{o1966fundamental}. According to \cite[Theorem 3.1]{bohm_inhomogeneous_1998}, if
\begin{equation}
\label{eqn_bohm's lower bound for A}
A \geq \frac{1}{n+d_1} \frac{d_2(d_2 - 1)^2}{4(d_1 - 1)}, \quad n=d_1+d_2,
\end{equation}
the principal orbit $\mathsf{G}/\mathsf{K}$ admits at most one $\mathsf{G}$-invariant Einstein metric, and the double disk bundle $M$ admits none. Our main theorem is as follows.
\begin{theorem}
\label{thm_main}
Define
$$
\Psi_{d_1,d_2}:=\frac{(4 (d_1-1)n^2+d_2^2)(3n+d_1)}{(2n^2+n+d_1)^2 d_1^2}\frac{d_2(d_2-1)^2}{4(d_1-1)}.
$$
If $\mathsf{G}/\mathsf{K}$ is a principal orbit in \eqref{eqn_homogeneous fibration} with $d_2\geq d_1\geq 2$, $(d_1,d_2)\notin\{(2,2), (2,3), (2,4)\}$, and $A\in\left[\Psi_{d_1,d_2},\frac{1}{n+d_1}\frac{d_2(d_2-1)^2}{4(d_1-1)}\right)$, there does not exist any $\mathsf{G}$-invariant cohomogeneity one Einstein metric on $M$. 
\end{theorem}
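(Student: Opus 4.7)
The plan is to recast the cohomogeneity one Einstein equation on $M$ as a polynomial ODE system on a compactified phase space and to exhibit a flow-invariant barrier that no Einstein trajectory can cross. Following the framework developed in \cite{chi2024existence} for the two-summand case \eqref{eqn_homogeneous fibration}, I would use Graef--B\"ohm-type variables built from the logarithmic derivatives $f_i'/f_i$ and the reciprocals $1/f_i$ of the metric components on the sphere fiber $\mathbb{S}^{d_1}$ and the base $\mathcal{Q}^{d_2}$. After imposing the Einstein--Hamiltonian constraint, a closed $\mathsf{G}$-invariant Einstein metric on the double disk bundle corresponds to an integral curve of the resulting polynomial vector field connecting two critical points on the boundary, each encoding the smooth closing up at one of the singular orbits. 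The first preparatory step is to locate these two critical points in terms of the structural triple $(d_1,d_2,A)$ and to write the defining system in a form where the dependence on $A$ is explicit.

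The heart of the argument is the construction of a polynomial function $\Phi$ on the phase space whose zero set acts as a barrier. The properties I would demand are: (a) $\{\Phi=0\}$ is invariant under the flow modulo the constraint, i.e.\ $\dot\Phi$ factors through $\Phi$ plus constraint terms; and (b) the two singular-orbit critical points lie in components of $\{\Phi\neq 0\}$ that are incompatible with the sign of $\dot\Phi$. B\"ohm's theorem for $A\geq\tfrac{d_2(d_2-1)^2}{4(n+d_1)(d_1-1)}$ corresponds to a linear $\Phi$, essentially the traceless part of the homogeneous Ricci of the singular orbit; below that threshold a more refined ansatz --- quadratic in the momenta with coefficients depending on $A$ --- is required. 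My approach would be to start from the most general such ansatz compatible with the two-summand symmetry, impose (a) to obtain a polynomial system in the ansatz coefficients and $A$, and read off $\Psi_{d_1,d_2}$ as the critical value at which this system first admits a real solution satisfying the sign requirements in (b). The precise shape of the formula --- in particular the appearance of $4(d_1-1)n^2+d_2^2$, $3n+d_1$, and $2n^2+n+d_1$ --- suggests that $\Psi_{d_1,d_2}$ emerges as a discriminant of a quadratic whose coefficients are linear combinations of $d_1$, $d_2$, and $n$.

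With $\Phi$ in hand, the proof concludes in a short sign-chase: one evaluates $\Phi$ at each singular-orbit critical point and verifies that, for every $A$ in the stated half-open interval, the two points lie on incompatible sides of the barrier. The exclusion of $(d_1,d_2)\in\{(2,2),(2,3),(2,4)\}$ should reflect the failure of the underlying positivity in low dimension and can be checked by direct substitution, and the hypothesis $d_2\geq d_1\geq 2$ ensures that the base $\mathcal{Q}^{d_2}$ supplies the dominant curvature scale, which is what allows the ansatz to succeed. The main obstacle, in my view, is the construction and certification of $\Phi$ itself: once $\dot\Phi$ is expanded in the phase variables and the parameters $(d_1,d_2,A)$ the resulting polynomial is large, and isolating a clean positivity certificate --- ideally a sum-of-squares decomposition --- whose failure exactly pins down the threshold $\Psi_{d_1,d_2}$ is where the serious algebraic work will lie.
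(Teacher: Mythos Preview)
Your general framework---recast as a polynomial ODE, identify the two singular-orbit critical points $p_0^\pm$, and build a barrier---matches the paper. But the specific mechanism you propose would not work, for two related reasons.

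First, your condition (a) asks for $\dot\Phi$ to factor through $\Phi$ plus constraint terms, so that $\{\Phi=0\}$ is flow-invariant. The actual barrier $P$ used in the paper fails this: one has
\[
P' = P\cdot(\text{linear factor}) + (X_1-X_2)(P+Q),
\]
and the remainder $(X_1-X_2)Q$ is not a constraint term. The surface $\{P=0\}$ is \emph{not} invariant; what makes it a one-sided barrier is that $Q<0$ there (more precisely, the reduced quantity $\omega=Q_YP_X-Q_XP_Y$ is negative on the relevant $(k,l)$-slice). Your ansatz search, which imposes (a), would discard $P$ and never find a workable $\Phi$.

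Second, your condition (b) and the ``short sign-chase'' at the end both presuppose that $p_0^+$ and $p_0^-$ lie in different components of $\{\Phi\neq 0\}$. For the paper's $P$ one computes $P(p_0^\pm)=0$: both critical points sit \emph{on} the barrier surface. The argument is therefore not a static separation but a dynamic one: each $\gamma_s$ enters $\mathfrak{C}_A=\{P<0\}$ immediately after $p_0^+$; the sign of $\omega$ prevents escape through $\{P=0\}$; the only exits are $\{H=0\}\cap\{X_1>0\}$ or $\{Z=\mu_1 Y\}$. The latter is ruled out by Proposition~\ref{prop_l gets too large}, and the former is ruled out by a $\mathbb{Z}_2$-symmetry argument you do not mention: if some $\gamma_{s_*}$ exited through $\{H=0\}$ and reached $p_0^-$, the time-reversed curve would give a $\gamma_{\bar s_*}$ escaping through $\{X_1-X_2=0\}\cap\{H>0\}$, contradicting the barrier.

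Finally, $\Psi_{d_1,d_2}$ does not arise as a discriminant of a single quadratic, nor via a sum-of-squares certificate. It is the value $\max_{k\in(0,1)}\rho_0(k)/\rho_1(k)$ coming from the \emph{resultant} $\mathrm{Res}_l(\omega,P_Y)$: above this threshold the quadratics $\omega(A,k,\cdot)$ and $P_Y(A,k,\cdot)$ share no root, which is exactly what forces $\omega<0$ on the locus $\{P_Y<0\}$ cut out by $P=0$. The exclusion of $(d_1,d_2)\in\{(2,2),(2,3),(2,4)\}$ enters through a derivative comparison $\frac{d\Omega}{dk}(1)>\frac{d\Xi}{dk}(1)$ needed in Proposition~\ref{prop_initial inclusion}, not merely through a final positivity check.
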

 From the classification in \cite{dickinson_geometry_2008}, we have the following table of examples for Theorem \ref{thm_main}. The double disk bundle associated to $(\spin(7),\spin(8),\spin(9))$ is $\mathbb{OP}^2\sharp \overline{\mathbb{OP}}^2$. Unlike its counterparts $\mathbb{CP}^{m+1}\sharp \overline{\mathbb{CP}}^{m+1}$ and $\mathbb{HP}^{m+1}\sharp \overline{\mathbb{HP}}^{m+1}$, the existence theorem does not carry over.
\begin{table}[H]
\centering
\begin{tabular}{|l|l|l|l|l|l|l|}
\hline
$\mathsf{K}$&$\mathsf{H}$&$\mathsf{G}$& $d_1$& $d_2$& $A$&$\Psi_{d_1,d_2}$\\
\hline
\hline
&&&&&&\\
$Sp(2)U(1)$& $U(4)$ & $SU(5)$ & $5$ & $8$ & $\frac{49}{50}=0.98$& $\frac{186494}{198025}\approx 0.94$\\[2ex]

$\spin(7)$ & $\spin(8)$ & $\spin(9)$ &$7$ & $8$&$\frac{1}{2}$& $\frac{8879}{20886}\approx 0.43$\\[2ex]

$G_2\times SO(2)$ & $\spin(7)SO(2)$ &$\spin(9)$& $7$ & $14$&$\frac{507}{196}\approx 2.59$& $\frac{11}{6}\approx 1.83$\\[2ex]

$\spin(11)Sp(1)$ & $\spin(12)Sp(1)$ & $E_7$ & $11$ & $64$ & $49$ & $\frac{26823819708}{1214772845}\approx 22.08$ \\[2ex]

$\spin(15)$ & $\spin(16)$ & $E_8$& $15$ & $128$  & $\frac{32258}{225}\approx 143.37$ &$\frac{28882022881}{576131150}\approx 50.13$\\[2ex]
\hline
\end{tabular}
\caption{}
\label{table: Non-existence}
\end{table}

This paper is structured as follow: In Section 2, we present the cohomogeneity one Einstein equation and recover \cite[Theorem 3.1]{bohm_inhomogeneous_1998}. In Section 3-4, we introduce a function $P$, which is also used for the existence theorem \cite{chi2024existence}, and construct a barrier set where the level surface $\{P=0\}$ is a face. For better readability, we leave those elementary yet complicated computations in the Appendix.

\section{B\"ohm's non-existence theorem}
\label{sec_Cohomogeneity one Einstein equations}
The dynamical system of cohomogeneity one Einstein metrics in this paper is identical to our earlier work in \cite{chi2024existence}. To avoid redundancy, we provide a concise summary of the ODE system in this section.

We consider the cohomogeneity one Einstein equations under the ansatz:
\begin{equation}
\label{eqn_ansatz}
dt^2+f_1^2(t)\left.b\right|_{\mathfrak{h}/\mathfrak{k}}+f_2^2(t)\left.b\right|_{\mathfrak{g}/\mathfrak{h}}.
\end{equation}
%is 
%\begin{align}
%\label{eqn: Einstein equation}
%\frac{\ddot{f_i}}{f_i}-\left(\frac{\dot{f_i}}{f_i}\right)^2&=-\trace(L)\frac{\dot{f_i}}{f_i}+r_i-\Lambda,\quad i=1,2;\\
%\label{eqn: conservation law in Einstein equation}
%d_1\frac{\ddot{f_1}}{f_1}+d_2\frac{\ddot{f_2}}{f_2}&=-\Lambda,
%\end{align}
%where
%$$
%r_1=\frac{d_1-1}{f_1^2}+A\frac{f_1^2}{f_2^4},\quad r_2=\frac{d_2-1}{f_2^2}-2\frac{d_1}{d_2} A\frac{f_1^2}{f_2^4}.
%$$
%The initial condition and terminal condition are
%\begin{equation}
%\label{eqn_initial condition}
%\lim\limits_{t\to 0}(f_1,\dot{f_1},f_2,\dot{f_2})=(0,1,f,0),\quad \lim\limits_{t\to t_*}(f_1,\dot{f_1},f_2,\dot{f_2})=(0,-1,\bar{f},0),\quad f,\bar{f}>0,
%\end{equation}
%so that the solution smoothly extends to both ends. 
%Complete solution on $M$ with $d_1=1$ was well studied in \cite{berard-bergery_sur_1982}. We further assume $d_2\geq d_1\geq 2$ so that $M$ is not a product space. 
Normalize the orbit space by $d\eta=\sqrt{\trace^2(L)+n\Lambda} dt$, where $L$ is the shape operator of $\mathsf{G}/\mathsf{K}$ in $M$. The cohomogeneity one Einstein equation for \eqref{eqn_ansatz} is transformed to a polynomial flow of $(X_1,X_2,Y,Z)$
\begin{equation}
\label{eqn_New Einstein equation}
\begin{split}
X_i'&=X_iH\left(G+\frac{1-H^2}{n}-1\right)+R_i-\frac{1-H^2}{n},\quad i=1,2;\\
Y'&=Y\left(H\left(G+\frac{1-H^2}{n}\right)-X_1\right),\\
Z'&=Z\left(H\left(G+\frac{1-H^2}{n}\right)+X_1-2X_2\right),\\
\end{split}
\end{equation}
on the invariant set 
\begin{equation}
\label{eqn_conservation equation for 1}
\mathcal{E}: \left\{\frac{1}{n-1}\left(G-H^2+d_1R_1+d_2R_2\right)=\frac{1-H^2}{n}\right\}\cap \{Y,Z\geq 0\}\cap \{H^2\leq 1\}, 
\end{equation}
where 
\begin{equation}
\label{eqn_curvature for G/K}
\begin{split}
&G:=d_1X_1^2+d_2X_2^2,\quad H:=d_1X_1+d_2X_2,\\
&R_1:=(d_1-1)Y^2+AZ^2,\quad R_2:=(d_2-1)YZ-\frac{2d_1}{d_2}AZ^2.
\end{split}
\end{equation}
The initial and terminal conditions where $\mathsf{G}/\mathsf{K}$ smoothly collapses to $\mathcal{Q}^{d_2}$ are respectively transformed to the following hyperbolic critical points 
$$p_0^\pm:=\left(\pm\frac{1}{d_1},0,\frac{1}{d_1},0\right).$$ 
By the local analysis in \cite{chi2024existence}, there exists a continuous $1$-parameter family of integral curves
$\{\gamma_s\mid s>0 \}$ that emanate $p_0^+$ and stay in the interior of
$$\mathcal{E}^+:=\mathcal{E}\cap \{H\geq 0\}\cap \{X_1-X_2\geq   0\}\cap \{\mu_1 Y-Z\geq 0\}$$
Each integral curve represents a local non-homothetic solution on a tubular neighborhood around $\mathcal{Q}^{d_2}$. We aim to derive conditions under which each $\gamma_s$ does not converge to $p_0^-$. We recover B\"ohm's non-existence theorem in the following.

\begin{theorem}{\cite[Theorem 3.1]{bohm_inhomogeneous_1998}}
\label{thm_bohm's non-existence thm}
If \eqref{eqn_bohm's lower bound for A} holds, then $\lim\limits_{\eta\to\infty}\gamma_s\neq p_0^-$ for any $s>0$. 
\end{theorem}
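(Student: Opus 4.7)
The plan is to exhibit a forward-invariant barrier separating $p_0^+$ from $p_0^-$. The natural candidate is the half-space $\{X_1 \geq X_2\}$, since $X_1 - X_2 = 1/d_1 > 0$ at $p_0^+$ while $X_1 - X_2 = -1/d_1 < 0$ at $p_0^-$. Once this half-space is shown to be positively invariant under \eqref{eqn_New Einstein equation}, the trajectory $\gamma_s$ starting at $p_0^+$ is confined to its interior for all future $\eta$, and therefore cannot accumulate at $p_0^-$.

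To verify the invariance, I would subtract the two $X_i$-equations in \eqref{eqn_New Einstein equation} to obtain
$$
(X_1 - X_2)' = (X_1 - X_2)\, H\left(G + \frac{1-H^2}{n} - 1\right) + (R_1 - R_2).
$$
The first term vanishes on the wall $\{X_1 = X_2\}$, so invariance reduces to checking the sign of $R_1 - R_2$. Substituting \eqref{eqn_curvature for G/K} and using $n + d_1 = 2 d_1 + d_2$ yields
$$
R_1 - R_2 = (d_1 - 1)\,Y^2 - (d_2 - 1)\, Y Z + \frac{A(n+d_1)}{d_2}\, Z^2,
$$
a quadratic form in $(Y,Z)$ with positive leading coefficient $d_1 - 1$. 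Its discriminant
$$
(d_2 - 1)^2 - \frac{4(d_1 - 1)(n + d_1)}{d_2}\, A
$$
is non-positive precisely when \eqref{eqn_bohm's lower bound for A} holds; under that hypothesis $R_1 - R_2 \geq 0$ on the whole quadrant $\{Y, Z \geq 0\}$, so $(X_1 - X_2)' \geq 0$ on the wall, and positive invariance follows.

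The only real content is identifying the correct linear barrier and observing that \eqref{eqn_bohm's lower bound for A} is exactly the threshold at which $R_1 - R_2$ becomes sign-definite on $\{Y, Z \geq 0\}$; no serious obstacle is expected at this stage. The genuinely new work of the paper, by contrast, lies in bending this linear barrier into the curved level set $\{P = 0\}$ of Theorem \ref{thm_main}, which relaxes the threshold from $\tfrac{d_2(d_2-1)^2}{4(d_1-1)(n+d_1)}$ down to $\Psi_{d_1, d_2}$.
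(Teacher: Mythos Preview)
Your argument is essentially the paper's: both isolate $X_1-X_2$ as the barrier, compute $(X_1-X_2)'=R_1-R_2$ on the wall $\{X_1=X_2\}$, and recognise \eqref{eqn_bohm's lower bound for A} as exactly the threshold at which the quadratic form $R_1-R_2$ in $(Y,Z)$ becomes non-negative.

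The one point you leave implicit is the borderline case of equality in \eqref{eqn_bohm's lower bound for A}. There $R_1-R_2$ is only positive \emph{semi}-definite, vanishing along the ray $\{Z=\mu_1 Y\}$, so the bare inequality $(X_1-X_2)'\geq 0$ on the wall does not by itself force strict positive invariance; your phrase ``positive invariance follows'' needs either an explicit appeal to Nagumo's tangent-cone criterion or the paper's direct argument. The paper observes that if $\gamma_s$ first touches $\{X_1=X_2\}$ at some $\eta_*$, then necessarily $R_1-R_2=0$ there as well, placing $\gamma_s(\eta_*)$ on the set $\mathcal{E}\cap\{X_1=X_2\}\cap\{R_1=R_2\}$; this set is flow-invariant (since $(Z/Y)'=2(Z/Y)(X_1-X_2)$ and $(X_1-X_2)'=R_1-R_2$ both vanish on it), so by uniqueness of solutions $\gamma_s$ would have to lie in it for all time, contradicting its origin at $p_0^+$. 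With that sentence added your proof is complete and matches the paper's.
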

\begin{proof}
Suppose $\gamma_s$ is a heterocline that joins $p_0^\pm$. Since $X_1-X_2$ is positive at $p_0^+$ and it is negative at $p_0^-$, we know that the function must vanish for the first time at some $\eta_*$. Then at $\gamma_s(\eta_*)$ we have
\begin{equation}
\label{eqn_deri of X1-X2}
(X_1-X_2)'=(X_1-X_2)H\left(G+\frac{1-H^2}{n}-1\right)+R_1-R_2=R_1-R_2.
\end{equation}
If $A>\frac{1}{n+d_1}\frac{d_2(d_2-1)^2}{4(d_1-1)}$, the quadratic equation
\begin{equation}
\label{eqn_homogeneous metric polynomial}
\frac{1}{Y^2}(R_1-R_2)=\frac{n+d_1}{d_2}A l^2-(d_2-1)l+(d_1-1)=0,\quad l:=\frac{Z}{Y}
\end{equation}
does not have any real root. We hence have $R_1>R_2$. The derivative $(X_1-X_2)'$ is positive at $\gamma_s(\eta_*)$, which is a contradiction.

If $A=\frac{1}{n+d_1}\frac{d_2(d_2-1)^2}{4(d_1-1)}$, it is necessary that $R_1-R_2$ also vanishes at $\gamma_s(\eta_*)$. Therefore, the point $\gamma_s(\eta_*)$ is in the invariant set $\mathcal{E}\cap \{X_1-X_2=0\}\cap \{R_1-R_2=0\}$. While $\gamma_s$ is not in this invariant set initially, we obtain a contradiction.
\end{proof}
Among examples of homogeneous spaces listed in \cite{dickinson_geometry_2008}, only \textbf{III}.11 is a sphere bundle that satisfy \eqref{eqn_bohm's lower bound for A}. Starting from the next section, we assume $A\leq \frac{1}{n+d_1}\frac{d_2(d_2-1)^2}{4(d_1-1)}$ so that \eqref{eqn_homogeneous metric polynomial} always have two real roots $0<\mu_2\leq \mu_1$. 

\section{Barrier of non-existence: Part I}
\label{sec_Barrier of non-existence: Part I}
In our earlier work on existence theorem \cite{chi2024existence}, we introduced a barrier set in
$\mathcal{E}^+$
to prove the existence of a $\gamma_s$ that joins $p_0^\pm$. The following polynomial plays an important role in the construction.
\begin{equation}
\label{eqn_barrier functions}
\begin{split}
P&:=X_1\left(R_2-\frac{1-H^2}{n}\right)-X_2\left(R_1-\frac{1-H^2}{n}\right)-2X_2(X_1-X_2)\left(X_1+\frac{d_2}{2d_1}X_2\right).
\end{split}
\end{equation}
Along an integral curve, we have 
\begin{align}
\label{eqn_P'}
P'&=P\left(H\left(3G+\frac{3}{n}(1-H^2)-1\right)+\frac{n}{d_1}X_2-X_1\right)+(X_1-X_2)(P+Q),
\end{align}
where 
\begin{equation}
\label{eqn_function Q}
\begin{split}
Q&=4X_2\left(X_1+\frac{d_2}{2d_1}X_2\right)\left(H+\frac{d_2}{2d_1}X_2\right)+\left(2X_1+2X_2+\frac{3d_2}{d_1}X_2\right)\frac{1-H^2}{n}\\
&\quad -2(d_2-1)X_1YZ-X_2\left(2(d_1-1)Y^2+\frac{3d_2}{d_1}(d_2-1)YZ\right).
\end{split}
\end{equation}
We showed that for $A>0$ sufficiently small, the inequality $P>0$ holds along some $\gamma_s$ and forces the integral curve to cross the face $\{X_1-X_2=0\}$, which leads to the existence of a cohomogeneity one Einstein metric on $M$. Here, we show that for $A$ sufficiently large, the inequality $P<0$ holds along \emph{any} $\gamma_s$ until the integral curve cross the face $\{X_1-X_2>0\}\cap \{H=0\}$. 

Consider the interior $\left(\mathcal{E}^+\right)^\circ$ as the union
$$
\left(\mathcal{E}^+\right)^\circ = \bigsqcup_{(k,l)\in\left(-\frac{d_1}{d_2},1\right)\times (0,\mu_1)} \mathcal{E}_{k,l},
$$
where
\begin{equation}
\label{eqn_kl slice}
\begin{split}
\mathcal{E}_{k,l}&:= \mathcal{E}^+ \cap \{X_2-kX_1=0,X_1> 0\}\cap \{Z-lY=0, Y> 0\}.
\end{split}
\end{equation}
\begin{proposition}
\label{prop_l gets too large}
If a $\gamma_s$ enters the set 
\begin{equation}
\label{eqn_bad set}
\mathcal{E} \cap \{X_1-X_2\geq 0\}\cap \{Z-\mu_1 Y\geq 0\},
\end{equation}
 then the integral curve does not converge to $p_0^-$.
\end{proposition}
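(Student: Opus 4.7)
The plan is to show that the set in \eqref{eqn_bad set} is a forward-invariant trap for the flow \eqref{eqn_New Einstein equation}. Once this is established, any integral curve entering the set must satisfy $Z/Y \geq \mu_1 > 0$ for all forward $\eta$, which contradicts convergence to $p_0^-$, where $Z = 0$ and $Y = 1/d_1 > 0$.

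The two sign computations on the boundary faces are straightforward. Subtracting the logarithmic derivatives of $Y$ and $Z$ in \eqref{eqn_New Einstein equation} gives $(Z/Y)' = 2(Z/Y)(X_1 - X_2)$, so on the face $\{Z = \mu_1 Y\}$ the derivative of $Z - \mu_1 Y$ equals $2\mu_1 Y(X_1 - X_2)$, which is non-negative on \eqref{eqn_bad set}. On the other face $\{X_1 = X_2\}$, equation \eqref{eqn_deri of X1-X2} reduces the derivative of $X_1 - X_2$ to $R_1 - R_2 = Y^2 \phi(Z/Y)$, where $\phi$ is the upward-opening quadratic in \eqref{eqn_homogeneous metric polynomial} with roots $\mu_2 \leq \mu_1$; hence this derivative is non-negative when $Z/Y \geq \mu_1$. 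On each face, the derivative is strictly positive whenever the other defining function is strictly positive, so a flow-box argument yields forward-invariance of the trap away from the corner.

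The only delicate point is the corner $\{X_1 = X_2,\ Z = \mu_1 Y\}$, where both first derivatives vanish simultaneously. The plan here is to show that $S := \mathcal{E} \cap \{X_1 = X_2\} \cap \{Z = \mu_1 Y\}$ is itself an invariant set: both $(X_1 - X_2)'$ and $(Z - \mu_1 Y)'$ lie in the polynomial ideal generated by $X_1 - X_2$ and $Z - \mu_1 Y$, since $\phi(Z/Y) = \phi'(\mu_1)(Z/Y - \mu_1) + O((Z/Y - \mu_1)^2)$ vanishes at the simple root $\mu_1$. A standard Gronwall-type uniqueness argument then gives invariance of $S$. Since $\gamma_s$ originates at $p_0^+$, where $Z/Y = 0 \neq \mu_1$, it is not contained in $S$ and so cannot meet the corner. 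This closes the trap argument and yields the proposition; the corner analysis is the one step I expect to require care, the rest being a direct reading-off of signs from \eqref{eqn_New Einstein equation} and \eqref{eqn_deri of X1-X2}.
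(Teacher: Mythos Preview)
Your argument is correct and follows the same route as the paper: compute $(Z/Y)'=2(Z/Y)(X_1-X_2)$ and $(X_1-X_2)'=(X_1-X_2)H\bigl(G+\tfrac{1-H^2}{n}-1\bigr)+R_1-R_2$, observe that $R_1-R_2\ge 0$ once $Z/Y\ge\mu_1$ (since the quadratic $\phi$ in \eqref{eqn_homogeneous metric polynomial} is nonnegative there under the standing assumption $A>0$), and conclude that the set \eqref{eqn_bad set} is forward invariant; the paper then rules out convergence to $p_0^-$ via $X_1-X_2<0$ there, while you use $Z/Y\to 0$, which is equally valid. Your separate corner analysis is sound but not strictly needed: the two inequalities $(X_1-X_2)'\ge 0$ on $\{X_1=X_2\}$ and $(Z/Y)'\ge 0$ on $\{Z=\mu_1 Y\}$ already place the vector field in the tangent cone at every boundary point, including the corner where both vanish, so Nagumo-type invariance applies directly.
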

\begin{proof}
We have
\begin{equation}
\begin{split}
\left(\frac{Z}{Y}\right)'&=2\frac{Z}{Y}(X_1-X_2)\\
(X_1-X_2)'&=(X_1-X_2)H\left(G+\frac{1}{n}(1-H^2)-1\right)+R_1-R_2.
\end{split}
\end{equation}
Since $R_1-R_2\geq  0$ if $\frac{Z}{Y}\geq \mu_1$,  the set \eqref{eqn_bad set} is invariant. As $X_1-X_2$ is negative at $p_0^-$, a $\gamma_s$ does not converge to $p_0^-$ if it enters the set \eqref{eqn_bad set}.
\end{proof}

Define
$$
\mathfrak{C}_A=\left(\mathcal{E}^+\right)^\circ\cap \{P< 0\}.
$$
Parametrize the algebraic surface $\left(\mathcal{E}^+\right)^\circ \cap \{P=0\}$ using $(k,l)$ as in \eqref{eqn_kl slice}. On each slice $\mathcal{E}_{k,l}$, we have 
\begin{equation}
\label{eqn_P and Q in X and Y}
\begin{split}
P&=P_Y Y^2X_1+P_X X_1^3,\quad Q=Q_Y Y^2X_1+Q_X X_1^3,
\end{split}
\end{equation}
where $P_Y(A,k,l)$, $P_X(k)$, $Q_Y(A,k,l)$, and $Q_X(k)$ are polynomials whose explicit formula were given in \cite{chi2024existence}. To maintain clarity, we also present their formula in the Appendix.

\begin{proposition}
\label{prop_px is negative somewhere}
For any $d_2\geq d_1\geq 2$, the polynomial $P_X$ vanishes at some $k_*\in\left(-\frac{d_1}{d_2},0\right)$. The polynomial is negative at $\left[-\frac{d_1}{d_2},k_*\right)$ and is positive at $(k_*,1)$.
\end{proposition}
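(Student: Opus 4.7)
The approach is to reduce the statement to a one-variable sign analysis. Using the formula for $P$ in \eqref{eqn_barrier functions} together with the constraint defining $\mathcal{E}$ to rewrite $(1-H^2)/n$ in terms of $X_1^2$ and $Y^2$, the coefficient $P_X$ of $X_1^3$ on the slice $\mathcal{E}_{k,l}$ depends only on $k$ and factors as
\begin{equation*}
P_X(k) = \frac{1-k}{n-1}\,\psi(k), \quad \psi(k) := d_1(d_1-1) + 2(d_1-1)(d_2-1)k + \Bigl(d_2(d_2-1) - \tfrac{d_2(n-1)}{d_1}\Bigr)k^2.
\end{equation*}
Since $(1-k)/(n-1) > 0$ throughout $[-d_1/d_2, 1)$, the sign of $P_X$ agrees with that of the at-most-quadratic $\psi$ there, and the entire problem collapses to understanding $\psi$.

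The next step is evaluation at the two natural boundary points. One has $\psi(0) = d_1(d_1-1) > 0$ for $d_1 \geq 2$, and a direct but slightly delicate simplification---in which several cross terms involving $d_1 d_2$ cancel against each other and against the $(n-1)$-contribution coming from the constraint---yields the clean identity $\psi(-d_1/d_2) = -d_1/d_2 < 0$. The intermediate value theorem then produces at least one root $k_* \in (-d_1/d_2, 0)$.

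For uniqueness of this root in $(-d_1/d_2, 1)$ and the asserted sign pattern, I would split on the sign of the leading coefficient $c_2 = d_2[d_1(d_2-2)-(d_2-1)]/d_1$. For $d_2 \geq d_1 \geq 2$, one verifies by inspection that $c_2 > 0$ except when $(d_1,d_2) = (2,2)$ (where $c_2 < 0$) or $(2,3)$ (where $c_2 = 0$). When $c_2 > 0$, Vieta's formulas give product of roots $d_1(d_1-1)/c_2 > 0$ and sum $-2(d_1-1)(d_2-1)/c_2 < 0$, so both roots of $\psi$ are real and negative; since $\psi$ opens upward and $\psi(-d_1/d_2) < 0$, the second root lies strictly below $-d_1/d_2$, leaving $k_*$ as the unique root of $\psi$ in $[-d_1/d_2, 1)$ with $\psi < 0$ on $[-d_1/d_2, k_*)$ and $\psi > 0$ on $(k_*, 1)$. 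When $c_2 = 0$, $\psi$ is affine with unique root $k_*$ and the sign pattern is immediate. The remaining case $(2,2)$ gives $\psi(k) = 2 + 2k - k^2$, whose roots $1 \pm \sqrt{3}$ produce the correct sign pattern on $[-1,1)$ by direct inspection. The one technical point that requires care is the cancellation producing $\psi(-d_1/d_2) = -d_1/d_2$; everything else is routine IVT together with a brief Vieta argument on a quadratic.
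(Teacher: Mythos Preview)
Your proof is correct and follows essentially the same route as the paper: factor $P_X$ as $(1-k)$ times a quadratic, evaluate the quadratic at $-d_1/d_2$ and $0$ to locate a root by the intermediate value theorem, and then argue via the sign of the leading coefficient that the second root lies outside $[-d_1/d_2,1)$. The paper uses direct concavity (splitting on $d_2>2$ versus $d_2=2$) rather than your Vieta argument, and it checks $\tilde P_X(1)>0$ in the concave-down case rather than computing the roots explicitly for $(2,2)$; your separate treatment of the linear case $(d_1,d_2)=(2,3)$ is actually a small improvement, since the paper's phrase ``concave up for $d_2>2$'' glosses over the vanishing leading coefficient there.
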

\begin{proof}
We have $P_X=\frac{1-k}{d_1(n-1)}\tilde{P}_X$, where
$$
\tilde{P}_X:=d_2(d_1d_2 - 2d_1 - d_2 +1)k^2 + 2(d_2 - 1)(d_1 -1)d_1k + d_1^2(d_1-1).
$$
We have
$$
\tilde{P}_X\left(-\frac{d_1}{d_2}\right)=-\frac{d_1^2}{d_2}<0,\quad  \tilde{P}_X(0)= d_1^2(d_1-1)>0.
$$
The polynomial $\tilde{P}_X$ vanishes at some $k_*\in\left(-\frac{d_1}{d_2},0\right)$. If $d_2>2$, the polynomial $\tilde{P}_X$ is concave up. The evaluations above indicate that $k_*$ is the larger root of $\tilde{P}_X$, and the smaller root must be less than $-\frac{d_1}{d_2}$. If $d_2=2$, the polynomial $\tilde{P}_X$ is concave down. From $\tilde{P}_X(1)=(n-1)(d_1n-n-d_1)>0$ it is clear that $k_*$ is the smaller root of $\tilde{P}_X$, and the larger root is larger than 1. For any $d_1\geq 2$, the factor $\tilde{P}_X$ is negative on $\left[-\frac{d_1}{d_2},k_*\right)$ and is positive on $(k_*,1)$. The proof is complete.
\end{proof}

We use $k_*$ in Proposition \ref{prop_px is negative somewhere} to divide $(\mathcal{E}^+)^\circ$ into the following three subsets.
$$
\mathcal{E}^+_{2}:=\bigcup\limits_{(k,l)\in\left (-\frac{d_1}{d_2},k_*\right]\times (0,\mu_1)}\mathcal{E}_{k,l},\quad \mathcal{E}^+_{1}:=\bigcup\limits_{(k,l)\in\left (k_*,0\right]\times (0,\mu_1)}\mathcal{E}_{k,l}, \quad \mathcal{E}^+_{0}:=\bigcup\limits_{(k,l)\in (0,1)\times (0,\mu_1)}\mathcal{E}_{k,l}.
$$
We consider the sign of $P$ and $Q$ on each $\mathcal{E}^+_i\cap \mathfrak{C}_A$.

\begin{proposition}
\label{prop_P<0 on E2}
For any $A\geq 0$. The inequality $P<0$ holds on $\mathcal{E}^+_{2}.$
\end{proposition}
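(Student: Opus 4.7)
The plan is to reduce the sign of $P$ on $\mathcal{E}^+_2$ to that of the single coefficient $P_Y$ from \eqref{eqn_P and Q in X and Y}, and then verify $P_Y\le 0$ by an elementary quadratic-in-$l$ analysis on each slice.

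On the interior of $\mathcal{E}^+_2$ we have $X_1>0$ and $Y>0$, so \eqref{eqn_P and Q in X and Y} gives
$$P \;=\; X_1\bigl(P_Y(A,k,l)\,Y^2+P_X(k)\,X_1^2\bigr).$$
Proposition \ref{prop_px is negative somewhere} yields $P_X(k)\le 0$ throughout $k\in(-d_1/d_2,k_*]$, with equality only at the right endpoint. It therefore suffices to establish
$$P_Y(A,k,l)\le 0\quad\text{for}\quad(k,l)\in(-d_1/d_2,k_*]\times(0,\mu_1),\ A\ge 0,$$
together with strict inequality along the slice $k=k_*$ (where $P_X$ vanishes), so that $P<0$ holds strictly even there.

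The first step is to evaluate $P_Y$ at $l=0$. From the explicit formula recorded in the Appendix, $P_Y(A,k,0)$ is independent of $A$ and affine in $k$, vanishing at $k=-d_1/(d_2-1)<-d_1/d_2$; hence $P_Y(A,k,0)<0$ strictly throughout $(-d_1/d_2,k_*]$. The second step is to evaluate at $l=\mu_1$: using \eqref{eqn_homogeneous metric polynomial} to replace $A\mu_1^2$ by $\tfrac{d_2}{n+d_1}\bigl((d_2-1)\mu_1-(d_1-1)\bigr)$, the boundary value reduces to a polynomial in $(k,\mu_1)$ alone, which a direct factorization shows to be non-positive.

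The main obstacle is to promote these endpoint bounds to an interior bound on $(0,\mu_1)$. A direct calculation shows the coefficient of $l^2$ in $P_Y$ equals
$$-\frac{A}{d_2(n-1)}\bigl(d_1(2d_1+d_2-2)+k\,d_2(2d_1+d_2-1)\bigr),$$
which is positive at $k=-d_1/d_2$ but negative at $k=0$; so $P_Y(A,k,\cdot)$ is convex in $l$ on one sub-range of $k$ and concave on the complementary sub-range. On the convex sub-range, Steps 1--2 immediately give $P_Y\le 0$ on $(0,\mu_1)$ since a convex quadratic lies below its secant. On the concave sub-range a positive interior maximum is a priori possible, and handling it is the technical heart of the argument: I would trade $A$ for $\mu_1$ via \eqref{eqn_homogeneous metric polynomial} and show that either the discriminant of $P_Y(A,k,\cdot)$ in $l$ is non-positive, or its two roots both lie outside $(0,\mu_1)$. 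This reduces to a polynomial inequality in $(k,\mu_1)$ over an explicit semialgebraic region, which can be dispatched by elementary but lengthy algebra, relegated to the Appendix in line with the paper's convention. Strictness along $k=k_*$ then follows from the strict bound $P_Y(A,k_*,0)<0$ of Step~1 combined with the interior bound just described.
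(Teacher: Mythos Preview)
Your reduction to the sign of $P_Y$ via Proposition~\ref{prop_px is negative somewhere} matches the paper, as does the evaluation $P_Y(A,k,0)<0$. But your treatment of the interior of $(0,\mu_1)$ diverges from the paper and leaves a genuine gap: in the concave sub-range you defer everything to an unspecified ``lengthy algebra'' involving the discriminant and the substitution of $A\mu_1^2$, without any evidence that this algebra terminates favorably. That is precisely the hard part of your scheme, and it is not done.

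The paper avoids this entirely with one extra observation you are missing: on the whole range $k\in(-d_1/d_2,k_*]$ one has
\[
\frac{\partial P_Y}{\partial l}(A,k,0)=\frac{(d_2-1)(d_1+d_2k-1)}{n-1}<0,
\]
because $k_*<-(d_1-1)/d_2$, which the paper checks via $\tilde P_X\bigl(-\tfrac{d_1-1}{d_2}\bigr)>0$ and Proposition~\ref{prop_px is negative somewhere}. With both $P_Y(A,k,0)<0$ and $\partial_lP_Y(A,k,0)<0$, the quadratic $l\mapsto P_Y(A,k,l)$ either decreases monotonically on $(0,\infty)$ (your concave sub-range) or first decreases and then increases (your convex sub-range). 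In the first case you are done immediately; in the second it suffices to check the right endpoint. Thus your convex/concave split is the wrong dichotomy: the derivative-at-zero observation collapses the concave case for free and reduces the convex case to the endpoint check, with no ``lengthy algebra'' needed anywhere.

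For the endpoint $l=\mu_1$, rather than substituting $A\mu_1^2$ from \eqref{eqn_homogeneous metric polynomial} and factoring, the paper uses that $R_1=R_2$ there to get directly
\[
P_Y(A,k,\mu_1)\,Y^2=-\frac{1-k}{\,n-1\,}R_1<0,
\]
which is both shorter and gives the strict inequality you need at $k=k_*$ without further argument.
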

\begin{proof}
By Proposition \ref{prop_px is negative somewhere}, the inequality
\begin{equation}
\begin{split}
P&=P_YY^2X_1+P_X X_1^3\leq P_YY^2X_1
\end{split}
\end{equation}
holds on $\mathcal{E}^+_2$.
It suffices to show that $P_Y(A,k,l)$ is negative for any $(k,l)\in\left(-\frac{d_1}{d_2},k_*\right]\times (0,\mu_1)$. We have 
\begin{equation}
\label{eqn_PY at l=0}
P_Y(A,k,0)=-\frac{(d_1-1)(d_1+kd_2-k)}{n-1}, \quad \frac{\partial P_Y}{\partial l}(A,k,0)=\frac{(d_1-1)(d_1+d_2k-1)}{n-1}.
\end{equation}
The first quantity in \eqref{eqn_PY at l=0} is negative for any $k\in \left(-\frac{d_1}{d_2},0\right]$. Since $P_X\left(-\frac{d_1-1}{d_2}\right)=\frac{(d_1-1)(n-1)}{d_1d_2^2}>0$, we know that $k_*<-\frac{d_1-1}{d_2}$ from Proposition \ref{prop_px is negative somewhere}. Therefore, the second quantity in \eqref{eqn_PY at l=0} is also negative for any $k\in \left(-\frac{d_1}{d_2},k_*\right]$. As a function of $l$, the polynomial $P_Y(A,k,l)$ either decreases first and then increases or monotonically decreases on $(0,\infty)$. Therefore, it suffices to check the negativity of $P_Y(A,k,\mu_1)$. Since $R_1=R_2$ if $l=\mu_1$, we have 
\begin{equation}
\begin{split}
P_Y(A,k,\mu_1)Y^2&=X_1R_2-kX_1R_1-\frac{d_1R_1+d_2R_2}{n-1}(X_1-kX_1)\\
&=X_1R_1-kX_1R_1-\frac{n}{n-1}R_1(X_1-kX_1)\\
&
=-\frac{1-k}{n-1}R_1< 0.
\end{split}
\end{equation}
Therefore, the inequality $P_Y<0$ holds for any $(k,l)\in\left(-\frac{d_1}{d_2},k_*\right]\times (0,\mu_1)$. The proof is complete.
\end{proof}
Proposition \ref{prop_P<0 on E2} implies that $\left(\mathcal{E}^+\right)^\circ \cap \{P=0\}$ does not intersect $\mathcal{E}^+_{2}$. To investigate the condition on $A$ for making $\mathcal{E}^+\cap \{P=0\}$ a barrier, we need to investigate the sign of $Q$ on $(\mathcal{E}^+_{1}\cup \mathcal{E}^+_0)\cap \{P=0\}$.

\section{Barrier of non-existence: Part II}
\label{sec_Barrier of non-existence: Part II}
With $P=0$, the function $Q$ can be rewritten as
\begin{equation}
\label{eqn_Q and omegaA}
\begin{split}
Q&= Q_Y Y^2X_1+Q_X \frac{P-P_YY^2X_1}{P_X}=\frac{Q_YP_X-Q_XP_Y}{P_X}Y^2X_1.
\end{split}
\end{equation}
Define the polynomial function $\omega(A,k,l)=Q_YP_X-Q_XP_Y$. Since $P_X>0$, the function $\omega$ has the same sign as $Q$ on $(\mathcal{E}^+_{1}\cup \mathcal{E}^+_0)\cap \{P=0\}$. 
We have
\begin{equation}
\label{eqn_omega formula}
\begin{split}
\omega(A,k,l)&=\frac{1}{d_1^2(n-1)}\left(\frac{2d_1+d_2k}{d_2}\omega_2 Al^2+(d_2-1)k\omega_1 l -(d_1-1)k^2\omega_0\right),
\end{split}
\end{equation}
where each $\omega_i(k)$ is a polynomial. By \cite[Proposition 4.8]{chi2024existence}, the inequalities $\omega_2<0$ and $\omega_0>0$ hold for any $k\in [0,1]$. The result can be extended to any $k\in \left[-\frac{d_1}{d_2},1\right]$; see Proposition \ref{prop_omega2} and Proposition \ref{prop_omega0} in the Appendix.

The $(k,l)$-coordinate of $p_0^+$ is $(0,0)$. By \cite[Proposition 4.5]{chi2024existence}, the function $\omega(A,k,l)$ attains a local maximum at $(0,0)$, provided that
\begin{equation}
\label{eqn_local barrier estimate}
A>\frac{d_2(d_2-1)^2}{d_1^2(d_1d_2-d_2+4)}. 
\end{equation}
In other words, with \eqref{eqn_local barrier estimate} satisfied, the face $\left(\mathcal{E}^+\right)^\circ \cap \{P=0\}$ is locally a barrier for $\mathfrak{C}_A$ near $p_0^+$. It is expected to increase the value of $A$ to make $\left(\mathcal{E}^+\right)^\circ \cap \{P=0\}$ a global barrier. An attempt to achieve this is considering the following condition:
\begin{condition}
\label{con_attempt 1}
As a quadratic of $l$, the function $\omega(A,k,l)$ does not have any real root for any $k\in (k_*,1)$.
\end{condition}
Our computations below show that Condition \ref{con_attempt 1} is too strong to generate any new estimate for $A$.
The discriminant of $\omega(A,k,l)$ is
$$
\frac{k^2}{d_1^4(n-1)^2}\left((d_2-1)^2\omega_1^2+4(d_1-1)\frac{2d_1+d_2k}{d_2}\omega_0\omega_2A\right).
$$
Define
\begin{equation}
\label{eqn_Omega}
\Omega_{d_1,d_2}(k):=-\frac{\omega_1^2}{(2d_1+d_2k)\omega_0\omega_2}\frac{d_2(d_2-1)^2}{4(d_1-1)}.
\end{equation}
By Proposition \ref{prop_property of Omega} in the Appendix, if $(d_1,d_2)\neq 2$, we have
\begin{equation}
\label{eqn_max_Omega}
\max_{k\in [0,1]}\Omega_{d_1,d_2}=  \Omega_{d_1,d_2}(1)=\frac{1}{n+d_1}\frac{d_2(d_2-1)^2}{4(d_1-1)}.
\end{equation}
If Condition \ref{con_attempt 1} is satisfied, the inequality \eqref{eqn_bohm's lower bound for A} necessarily holds and leads to \cite[Theorem 3.1]{bohm_inhomogeneous_1998}. 

Since $P_X>0$ for any $k\in (k_*,1)$, the vanishing of $P$ implies $P_Y(A,k,l)<0$ for any $k\in (k_*,1)$. We hence consider the following weaker condition:
\begin{condition}
\label{con_attempt 2}
As a quadratic of $l$, the inequality $\omega(A,k,l)< 0$ holds on $\{l\mid l>0, P_Y(A,k,l)<0\}$ for any $k\in (k_*,1)$.
\end{condition}

To obtain an estimate of $A$ that satisfy Condition \ref{con_attempt 2}, it is natural to consider the resultant between $\omega$ and $P_Y$ as functions of $l$.
\begin{proposition}
\label{prop_resultant py qy}
For any $A>0$, the polynomials $\omega$ and $P_Y$ do not share any common root for any $k\in (k_*,0]$.  If $A\geq \Psi_{d_1,d_2}$, then $\omega$ and $P_Y$ do not share any common root for any $k\in (0,1)$.
\end{proposition}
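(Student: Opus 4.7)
The plan is to eliminate $A$ between $P_Y = 0$ and $\omega = 0$, exploiting that both are linear in $A$. For $\omega$ this is immediate from \eqref{eqn_omega formula}, and for $P_Y$ it follows because $A$ enters $P$ only through the terms $A Z^2 = A l^2 Y^2$ in $R_1, R_2$ and (via the conservation equation \eqref{eqn_conservation equation for 1}) in $\frac{1-H^2}{n}$. Writing
\[
P_Y = P_Y^{(0)}(k,l) + A\,P_Y^{(1)}(k,l), \qquad \omega = \omega^{(0)}(k,l) + A\,\omega^{(1)}(k,l),
\]
the $A$-coefficients $P_Y^{(1)} = \alpha(k)\,l^2$ and $\omega^{(1)} = \beta(k)\,l^2$ are both pure $l^2$ monomials (for $\omega^{(1)}$, read off from \eqref{eqn_omega formula}; for $P_Y^{(1)}$, from the observation above), while $P_Y^{(0)}$ and $\omega^{(0)}$ are at most linear in $l$. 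A common root $l_0 > 0$ of $P_Y$ and $\omega$ for some $A$ is therefore equivalent to the vanishing of the linear-in-$l$ function $\alpha(k)\,\omega^{(0)} - \beta(k)\,P_Y^{(0)}$; for each $k$ this has a unique root $l = l(k)$, and the corresponding $A$ is then determined by $A(k) = -P_Y^{(0)}(k, l(k))/P_Y^{(1)}(k, l(k))$.

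For $k \in (k_*, 0]$: I would show that either $l(k) \leq 0$ or the induced $A(k) \leq 0$, which rules out common roots for any $A > 0$. The key sign data are: from \eqref{eqn_PY at l=0} and the bound $k_* < -(d_1 - 1)/d_2$ established in the proof of Proposition \ref{prop_P<0 on E2}, both $P_Y^{(0)}(k, 0) < 0$ and $\partial_l P_Y^{(0)}(k, 0) < 0$; from that same proposition, $P_Y(A, k, \cdot) < 0$ on $[0, \mu_1]$; and from Propositions \ref{prop_omega0}, \ref{prop_omega2} of the Appendix, $\omega_0 > 0$ and $\omega_2 < 0$, so $\omega^{(0)}$ and $\beta$ have definite signs. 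A direct sign chase on $\alpha(k)\omega^{(0)}(k, l) - \beta(k) P_Y^{(0)}(k, l)$ together with the sign of $-P_Y^{(0)}/P_Y^{(1)}$ then yields the conclusion.

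For $k \in (0, 1)$: The set of ``bad'' $A$-values is the image of the explicit rational function $A(k)$ constructed above, so the statement is equivalent to
\[
\max_{k \in (0, 1)} A(k) \leq \Psi_{d_1, d_2}.
\]
I would locate the maximum by solving $A'(k) = 0$, checking boundary limits as $k \to 0^+$ and $k \to 1^-$, and comparing the critical value to the stated closed form. The main obstacle is this final identification: matching an explicit rational maximum to the specific formula $\Psi_{d_1, d_2} = \frac{(4(d_1-1)n^2 + d_2^2)(3n + d_1)}{(2n^2 + n + d_1)^2 d_1^2}\cdot \frac{d_2(d_2-1)^2}{4(d_1-1)}$ is a delicate algebraic identity whose verification relies on the explicit formulas for $\omega_0, \omega_1, \omega_2$ given in the Appendix, and is presumably the reason those computations are deferred there.
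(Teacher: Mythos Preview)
Your elimination-of-$A$ strategy and the paper's resultant-in-$l$ strategy are two routes to the same object: both identify, for each $k$, the unique value $A(k)$ at which $\omega$ and $P_Y$ share an $l$-root, and then bound $A(k)$. The paper's route is considerably shorter because it exploits the factorization $\omega=P_XQ_Y-Q_XP_Y$ (with $P_X,Q_X$ independent of $l$) to get
\[
\mathrm{Res}_l(\omega,P_Y)=P_X^2\,\mathrm{Res}_l(Q_Y,P_Y)=P_X^2\cdot\frac{(d_1-1)A}{d_1^2d_2^2(n-1)^2}(\rho_1A-\rho_0),
\]
with explicit quartic polynomials $\rho_0,\rho_1$ in $k$. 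This gives $A(k)=\rho_0(k)/\rho_1(k)$ directly; your linear elimination would eventually produce the same expression (indeed the $P_X$ factor reappears in $\alpha\,\omega^{(0)}-\beta\,P_Y^{(0)}$ once you substitute $\omega^{(0)}=P_XQ_Y^{(0)}-Q_XP_Y^{(0)}$), but you do not notice this simplification.

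For $k\in(k_*,0]$ your proposed sign chase has errors. The sign $\partial_l P_Y^{(0)}(k,0)<0$ that you quote holds only for $k<-(d_1-1)/d_2$, not on all of $(k_*,0]$; and Proposition~\ref{prop_P<0 on E2} concerns $k\in(-d_1/d_2,k_*]$, not the range $(k_*,0]$ you are treating. The paper bypasses all of this: one sees by inspection that $\rho_0$ carries an explicit factor of $k$, so $\rho_0\le 0$ and hence $A(k)\le 0$ for $k\le 0$, with $\rho_1>0$ obvious. No sign chase is needed.

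For $k\in(0,1)$ the paper does not locate interior critical points; instead it shows $\rho_0/\rho_1$ is monotone increasing (the derivative has a visibly positive numerator $\rho_3$), so the supremum is the boundary value $(\rho_0/\rho_1)(1)=\Psi_{d_1,d_2}$, obtained by substituting $k=1$. This computation is done in the proof itself, not the Appendix; your guess that the identification of $\Psi_{d_1,d_2}$ is a delicate identity deferred elsewhere is mistaken.
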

\begin{proof}
We compute 
\begin{equation}
\label{eqn_res_omega_PY}
\begin{split}
\mathrm{Res}_l(\omega,P_Y)&=\mathrm{Res}_l(P_XQ_Y-P_YQ_X,P_Y)\\
%&=\mathrm{Res}_l(P_XQ_Y,P_Y)\\
&=P_X^2\mathrm{Res}_l(Q_Y,P_Y)\\
&=P_X^2\frac{(d_1-1)A}{d_1^2d_2^2(n-1)^2}(\rho_1A-\rho_0),
\end{split}
\end{equation}
where 
\begin{equation}
\begin{split}
\rho_1&:=4 d_1^2 (d_1-1)(2d_2^2 k^2+ d_2k^2 + 4 d_1 d_2k+ 2d_1k  + 2d_1^2)^2,\\
\rho_0&:=(d_2-1)^2kd_2(4d_1+3d_2 k)(4(d_1-1)(d_1+d_2k)^2+d_2^2k^2).\\
%&=(d_2-1)^2kd_2(4d_1+3d_2 k)(4(d_1-1)(d_1+d_2k)^2+d_2^2k^2).
\end{split}
\end{equation}
By Proposition \ref{prop_px is negative somewhere}, the factor $P_X$ does not vanish on $(k_*,1)$. By observation, it is obvious that $\rho_1A-\rho_0>0$ for any $(A,k)\in (0,\infty)\times (k_*,0]$. Therefore, the quadratic functions $\omega$ and $P_Y$ do not share any common root for each $(A,k)\in (0,\infty)\times (k_*,0]$.

Assume $k\in (0,1)$ in the following. As $\rho_1>0$ for $k\in (0,1)$, we consider the rational function $\frac{\rho_0}{\rho_1}$ and compute
$$
\frac{d}{dk}\left(\frac{\rho_0}{\rho_1}\right)=\frac{2d_2(d_2-1)^2(d_1+d_2k)}{(d_1-1)(2d_2^2 k^2+ d_2k^2 + 4 d_1 d_2k+ 2d_1k  + 2d_1^2)^3}\rho_3,
$$
where 
\begin{equation}
\begin{split}
\rho_3&=(2  d_2^3 +  d_2^2) k^3 + (8 d_1  d_2^2 - 2 d_1  d_2 - 5  d_2^2 + 2  d_2) k^2\\
&\quad  + (10 d_1^2  d_2 - 4 d_1^2 - 10 d_1  d_2 + 4 d_1) k + 4 d_1^3 - 4 d_1^2.\\
%&= (2  d_2^3 +  d_2^2) k^3 + (3d_2^2+2d_2(4d_2-1)(d_1 -1)) k^2\\
%&\quad  + (10  d_2- 4)d_1(d_1-1)  k + 4 d_1^2(d_1-1)\\
%&>0
\end{split}
\end{equation}
Since the coefficient of each $k^i$ in $\rho_3$ is positive, the polynomial is positive for each $k\in (0,1)$. The function $\frac{\rho_0}{\rho_1}$ monotonically increases on $(0,1)$. Take 
$$
 \Psi_{d_1,d_2}:=\left(\frac{\rho_0}{\rho_1}\right)(1)=\frac{(4 (d_1-1)n^2+d_2^2)(3n+d_1)}{(2n^2+n+d_1)^2 d_1^2}\frac{d_2(d_2-1)^2}{4(d_1-1)}.
$$
It is clear that $\mathrm{Res}_l(\omega_A,P_Y)> 0$ for each $k\in (0,1)$ if $A\geq \Psi_{d_1,d_2}$. The proof is complete.
\end{proof}

We proceed to prove that $\omega< 0$ on $\mathcal{E}^+_{1}\cap \{P=0\}$.
\begin{proposition}
\label{prop_roots of omega make Py>0}
The quadratic function $P_Y(A,\tilde{k},l)$ is concave down for any $(A,k)\in (0,\infty)\times (k_*,0)$. Suppose the polynomial $\omega(\tilde{A},\tilde{k},l)$ has positive real roots $\mu_1(\tilde{A})\geq \mu_2(\tilde{A})>0$ for some $(\tilde{A},\tilde{k})\in (0,\infty)\times (k_*,0)$. Then $P_Y(\tilde{A},\tilde{k},l)$ also have real roots $\psi_1(\tilde{A})> \psi_2(\tilde{A})$ and the inclusion $[\mu_2(\tilde{A}),\mu_1(\tilde{A})]\subset (\psi_2(\tilde{A}),\psi_1(\tilde{A}))$ holds.
\end{proposition}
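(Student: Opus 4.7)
Concavity of $P_Y(A,k,\cdot)$ as a function of $l$ reduces to checking that its $l^2$-coefficient is strictly negative on $(0,\infty)\times(k_*,0)$. From the Appendix formula for $P_Y$, this coefficient factors as $A$ times an affine function of $k$; the bound $k_*<-\frac{d_1-1}{d_2}$ extracted from the proof of Proposition \ref{prop_px is negative somewhere}, together with $d_2\geq d_1\geq 2$, makes the required inequality a short direct calculation.

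For the inclusion, the main input is the resultant estimate of Proposition \ref{prop_resultant py qy}, which provides $\mathrm{Res}_l(\omega,P_Y)>0$ for every $(A,k)\in(0,\infty)\times(k_*,0]$. Writing this resultant as $B_2^2\,P_Y(\mu_1)P_Y(\mu_2)$, where $B_2\neq 0$ is the leading coefficient of $\omega$ in $l$, yields $P_Y(\mu_1)P_Y(\mu_2)>0$, so $P_Y(\mu_1)$ and $P_Y(\mu_2)$ share a common sign. Combined with the concavity just established and with $P_Y(\tilde A,\tilde k,0)<0$ (as in the first half of the proof of Proposition \ref{prop_P<0 on E2}), only two configurations are consistent with the data: the desired bracketing $\psi_2<\mu_2\leq\mu_1<\psi_1$, or the interlaced alternative $\mu_2<\psi_2\leq\psi_1<\mu_1$.

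To exclude the interlaced alternative, the plan is to evaluate $P_Y$ at the vertex $l^\star=-B_1/(2B_2)\in[\mu_2,\mu_1]$ of $\omega$, at which $\omega(l^\star)\geq 0$. The defining identity $\omega=P_XQ_Y-Q_XP_Y$ together with $P_X>0$ on $(k_*,1)$ gives
$$
P_Y(l^\star)=\frac{P_X\,Q_Y(l^\star)-\omega(l^\star)}{Q_X}.
$$
Establishing $Q_X<0$ throughout $(k_*,0)$ and $Q_Y(l^\star)<0$ will force $P_Y(l^\star)>0$. Concavity down of $P_Y$ with $P_Y(\tilde A,\tilde k,0)<0$ then produces real roots $\psi_2<\psi_1$ of $P_Y$ bracketing $l^\star$, and the sign condition $P_Y(\mu_1)P_Y(\mu_2)>0$ rules out the interlaced configuration, yielding the strict inclusion $[\mu_2,\mu_1]\subset(\psi_2,\psi_1)$.

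The main obstacle is this last sign analysis: neither $Q_X(k)$ nor $Q_Y(A,k,l^\star)$ factors cleanly. I would work from the explicit Appendix formulas, verifying $Q_X<0$ on $(k_*,0)$ by evaluating it at the endpoints $k=-d_1/d_2$ and $k=0$ (both negative by direct computation) and controlling the intermediate behavior via the structural bound $k_*\in\bigl(-d_1/d_2,-(d_1-1)/d_2\bigr)$, while for $Q_Y(l^\star)$ I would exploit the explicit expression of $l^\star$ in terms of $A$ and $k$ that comes from $\omega'(l^\star)=0$.
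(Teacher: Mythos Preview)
Your proposal has two gaps, one minor and one substantive.

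\textbf{Concavity.} The bound $k_*<-\frac{d_1-1}{d_2}$ is an \emph{upper} bound on $k_*$; for concavity you need the \emph{lower} bound $k_*>-\frac{d_1(n+d_1-2)}{d_2(n+d_1-1)}$ so that the $l^2$-coefficient of $P_Y$ stays negative for every $k\in(k_*,0)$. Since $-\frac{d_1(n+d_1-2)}{d_2(n+d_1-1)}$ lies strictly between $-\frac{d_1}{d_2}$ and $-\frac{d_1-1}{d_2}$, your cited bound gives no information. The paper obtains the correct lower bound by evaluating $P_X$ at $-\frac{d_1(n+d_1-2)}{d_2(n+d_1-1)}$ and observing it is negative. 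This is easily fixed.

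\textbf{The inclusion.} The real problem is your final step. You claim that $P_Y(l^\star)>0$ together with $P_Y(\mu_1)P_Y(\mu_2)>0$ rules out the interlaced configuration $\mu_2<\psi_2\leq\psi_1<\mu_1$. It does not: in that configuration $P_Y(\mu_1)$ and $P_Y(\mu_2)$ are both negative (hence their product is positive), and the midpoint $l^\star=(\mu_1+\mu_2)/2$ can perfectly well land in $(\psi_2,\psi_1)$, so $P_Y(l^\star)>0$ is also compatible. Both constraints are symmetric between the two configurations, so no amount of sign analysis at $l^\star$ alone will separate them. A direct route that \emph{would} work is to evaluate at a root $\mu_i$ of $\omega$ instead of at the vertex: there $\omega(\mu_i)=0$, so $P_Y(\mu_i)=P_X\,Q_Y(\mu_i)/Q_X$, and showing $Q_Y(\mu_i)<0$ for one $i$ gives $P_Y(\mu_i)>0$, after which the resultant sign forces both $P_Y(\mu_j)>0$. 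But you would then have to control $Q_Y$ at the (implicit) roots of $\omega$, which is not obviously easier than what you already flag as the main obstacle.

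For comparison, the paper avoids this difficulty entirely by a continuity argument in $A$: fixing $\tilde k$, it lets $A\to 0$, computes $\lim_{A\to 0}P_Y(A,\tilde k,\mu_2(A))$ explicitly via the relations between the $\omega_i$ and the coefficients $P_{Y,j},Q_{Y,j}$, and finds it positive. This seeds the inclusion for small $A$; the non-vanishing of $\mathrm{Res}_l(\omega,P_Y)$ on $(0,\infty)\times(k_*,0]$ then propagates the inclusion up to $\tilde A$ (any change of configuration would force a common root along the way). The same resultant argument handles $\mu_1$.
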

\begin{proof}
Since $P_X\left(-\frac{d_1(n+d_1-2)}{d_2(n+d_1-1)}\right)=-\frac{d_1(n-1)(2d_1+d_2)}{d_2(2d_1+d_2-1)^3}<0$, we know that $k_*>-\frac{d_1(n+d_1-2)}{d_2(n+d_1-1)}$ by Proposition \ref{prop_px is negative somewhere}. Therefore, the polynomial $P_Y(A,k,l)$ is concave down for any $(A,k)\in (0,\infty)\times (k_*,0)$ by \eqref{eqn_PY}.

By Proposition \ref{prop_omega2} and Proposition \ref{prop_omega0} in the Appendix, if $\omega(\tilde{A},\tilde{k},l)$ has positive real roots for some $\tilde{k}\in (k_*,0)$, the factor $\omega_1(\tilde{k})$ must be negative. With such a $\tilde{k}$ fixed, the polynomial $\omega(A,\tilde{k},l)$ has positive real roots $\mu_2(A)<\mu_1(A)$ for any $A\in (0,\tilde{A})$.  It is clear that $\lim\limits_{A\to 0}\mu_2(A)=\frac{(d_1-1)\tilde{k}\omega_0(\tilde{k})}{(d_2-1)\omega_1(\tilde{k})}$. 

Let
$$
P_Y(A,k,l)=P_{Y,2}(k)Al^2+P_{Y,1}(k)l+P_{Y,0}(k),\quad Q_Y(A,k,l)=Q_{Y,2}(k)Al^2+Q_{Y,1}(k)l+Q_{Y,0}(k).
$$
By the definition of $\omega$, we have
\begin{equation}
\label{eqn_omega1 and omega0}
\begin{split}
-\frac{(d_1-1)k^2}{d_1^2(n-1)}\omega_0=Q_{Y,0}P_X-P_{Y,0}Q_X,\quad \frac{(d_2-1)k}{d_1^2(n-1)}\omega_1=Q_{Y,1}P_X-P_{Y,1}Q_X.
\end{split}
\end{equation}
Therefore,
\begin{equation}
\label{eqn_P_Y(mu_2)}
\begin{split}
\lim\limits_{A\to 0} P_Y(A,\tilde{k},\mu_2(A))
%&=P_{Y,1}\frac{(d_1-1)\tilde{k}\omega_0}{(d_2-1)\omega_1}+P_{Y,0}\\
&=\frac{1}{(d_2-1)\omega_1}\left(P_{Y,1}(d_1-1)\tilde{k}\omega_0+P_{Y,0}(d_2-1)\omega_1\right)\\
&=\frac{d_1^2(n-1)}{(d_2-1)\tilde{k}\omega_1}P_X (P_{Y,0} Q_{Y,1}-P_{Y,1} Q_{Y,0}) \quad \text{by \eqref{eqn_omega1 and omega0}}\\
&=-\frac{d_1(d_1-1)(4d_1+3d_2\tilde{k})}{\omega_1(\tilde{k})}P_X(\tilde{k})\\
&> 0.
\end{split}
\end{equation}
As $P_Y$ is concave down, the limit \eqref{eqn_P_Y(mu_2)} indicates that for a sufficiently small $\underline{A}>0$, not only the real roots $\psi_2(\underline{A})<\psi_1(\underline{A})$ of $P_Y(\underline{A},\tilde{k},l)$ exist, but also 
\begin{equation}
\label{eqn_inclusion undreline A}
\mu_2(\underline{A})\in (\psi_2(\underline{A}),\psi_1(\underline{A})).
\end{equation}

Suppose $P_Y(\tilde{A},\tilde{k},l)$ does not have real roots. Choose $\check{A}\in (\underline{A},\tilde{A})$ such that $\psi_2(\check{A})=\psi_1(\check{A})$. Proposition \ref{prop_resultant py qy} is violated by some $A\in (\underline{A},\check{A}]$ since \eqref{eqn_inclusion undreline A} holds. Therefore, the quadratic function $P_Y(\tilde{A},\tilde{k},l)$ must have real roots $\psi_1(\tilde{A})>\psi_2(\tilde{A})$ and $\mu_2(\tilde{A})\in (\psi_2(\tilde{A}),\psi_1(\tilde{A}))$.

We claim that it is also true that $\mu_1(\tilde{A})\in (\psi_2(\tilde{A}),\psi_1(\tilde{A}))$. Suppose otherwise; then we must have 
$$\psi_2(\tilde{A})<\mu_2(\tilde{A})<\psi_1(\tilde{A})<\mu_1(\tilde{A}).$$
Let $\hat{A}$ be the smallest number in $[\tilde{A},\infty)$ such that at least one of $\mu_2(\hat{A})=\mu_1(\hat{A})$ and $\psi_2(\hat{A})=\psi_1(\hat{A})$ is satisfied. Proposition \ref{prop_resultant py qy} is violated by some $A\in (\tilde{A},\hat{A}]$. Therefore, the inclusion $[\mu_2(\tilde{A}),\mu_1(\tilde{A})]\subset (\psi_2(\tilde{A}),\psi_1(\tilde{A}))$ must hold.
\end{proof}

\begin{proposition}
\label{prop_Q<0 on E1}
If $A>0$, the function $\omega$ is negative on $\mathcal{E}^+_{1}\cap \{P=0\}$.
\end{proposition}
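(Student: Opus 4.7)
The plan is to pass from the constraint $\{P=0\}$ to a one-sided constraint on $P_Y$, and then invoke Proposition \ref{prop_roots of omega make Py>0} to show that the set $\{\omega \geq 0\}$ is contained inside $\{P_Y > 0\}$, so the two conditions are incompatible on $\mathcal{E}^+_1$.

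First I would use the decomposition $P = P_Y Y^2 X_1 + P_X X_1^3$ from \eqref{eqn_P and Q in X and Y}. On $\mathcal{E}^+_1$ we have $k \in (k_*, 0]$, $X_1, Y > 0$, and by Proposition \ref{prop_px is negative somewhere} also $P_X(k) > 0$. Hence $P = 0$ forces
$$
P_Y(A, k, l) \;=\; -P_X(k)\,(X_1/Y)^2 \;<\; 0,
$$
and it suffices to prove $\omega(A, k, l) < 0$ whenever $k \in (k_*, 0]$, $l > 0$, and $P_Y(A, k, l) < 0$.

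Next I would analyze $\omega$ as a quadratic in $l$ via \eqref{eqn_omega formula}. The leading coefficient $\frac{2d_1+d_2 k}{d_2}\omega_2 A$ is negative, because $k > -d_1/d_2$ ensures $2d_1 + d_2 k > 0$, Proposition \ref{prop_omega2} of the Appendix gives $\omega_2 < 0$, and $A > 0$; so $\omega(A, k, \cdot)$ is concave down. Its constant term $-(d_1-1)k^2\omega_0(k)/(d_1^2(n-1))$ is nonpositive (using $\omega_0 > 0$), and strictly negative except at the boundary $k = 0$, where the whole expression collapses to $\frac{2\omega_2 A}{d_1 d_2 (n-1)}\,l^2 < 0$ for $l > 0$. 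So only $k \in (k_*, 0)$ needs further argument.

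For such $k$, either $\omega(A, k, \cdot)$ has no positive real root — in which case concavity together with $\omega(A, k, 0) < 0$ already forces $\omega < 0$ for all $l > 0$ — or it has positive real roots $0 < \mu_2(A) \leq \mu_1(A)$. In the latter case Proposition \ref{prop_roots of omega make Py>0} supplies real roots $\psi_2 < \psi_1$ of $P_Y(A, k, \cdot)$ with $[\mu_2, \mu_1] \subset (\psi_2, \psi_1)$. Since $P_Y(A, k, \cdot)$ is concave down (by Proposition \ref{prop_roots of omega make Py>0}) with $P_Y(A, k, 0) < 0$ (by \eqref{eqn_PY at l=0}), $P_Y > 0$ holds precisely on $(\psi_2, \psi_1)$. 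The constraint $P_Y(A, k, l) < 0$ therefore excludes $l \in [\psi_2, \psi_1]$, hence also $l \in [\mu_2, \mu_1]$, which is exactly the set on which $\omega(A, k, l) \geq 0$. Thus $\omega < 0$ on $\mathcal{E}^+_1 \cap \{P=0\}$. The main subtlety is the inclusion of quadratic root intervals in the third step, but this has already been proved in Proposition \ref{prop_roots of omega make Py>0}; everything else is sign bookkeeping of the coefficients of $\omega$ and $P_Y$.
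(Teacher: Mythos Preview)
Your proof is correct and follows essentially the same approach as the paper's: reduce $\{P=0\}$ on $\mathcal{E}^+_1$ to the constraint $P_Y<0$, treat $k=0$ separately, and for $k\in(k_*,0)$ apply Proposition~\ref{prop_roots of omega make Py>0} when $\omega(A,k,\cdot)$ has positive roots. Your case split (no positive root vs.\ positive roots) is a slight streamlining of the paper's three-way split (no real roots, real negative roots via $\omega_1>0$, real positive roots via $\omega_1<0$), but the key step---the root-interval inclusion $[\mu_2,\mu_1]\subset(\psi_2,\psi_1)$---is identical.
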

\begin{proof}
If $k=0$, then $\omega(A,0,l)< 0$ for any $l\in (0,\infty)$.

Fix a $k\in (k_*,0)$ in the following. If $\omega(A,k,l)$ does not have real roots, then the function is negative for any $l\in\mathbb{R}$. Note that this case includes $\omega_1(k)=0$. If $\omega(A,k,l)$ has real roots and $\omega_1(k)>0$, both roots are negative and the function is negative for any $l\in (0,\infty)$. If $\omega_1(k)<0$, then both roots of $\omega(A,k,l)$ are positive. Proposition \ref{prop_roots of omega make Py>0} implies that $\omega(A,k,l)<0$ on $\{l\mid l>0, P_Y(A,k,l)<0\}$. By the arbitrariness of $k\in (k_*,0]$, we conclude that $\omega<0$ on $\mathcal{E}^+_{1}\cap \{P=0\}$.
\end{proof}

We consider the sign of $\omega$ on $\mathcal{E}^+_2\cap \{P=0\}$.
\begin{proposition}
\label{prop_prop of Omega 2}
Let $(d_1,d_2)\neq (2,2)$. For a fixed $A\in \left[\Psi_{d_1,d_2},\frac{1}{n+d_1}\frac{d_2(d_2-1)^2}{4(d_1-1)}\right)$, if $\omega(A,\tilde{k},l)$ has real roots for some $\tilde{k}\in (0,1)$, then so does $\omega(A,k,l)$ for any $k\in [\tilde{k},1)$.
\end{proposition}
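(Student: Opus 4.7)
The plan is to reformulate the proposition as a connectedness statement for super-level sets of $\Omega_{d_1,d_2}$. As a quadratic in $l$, $\omega(A,k,l)$ admits real roots iff its discriminant is non-negative. By the appendix propositions, $\omega_0>0$ and $\omega_2<0$ on $[-d_1/d_2,1]$, and $2d_1+d_2k>0$ on $(0,1)$, so the coefficient of $A$ in the bracketed discriminant
\begin{equation*}
(d_2-1)^2\omega_1^2 + 4(d_1-1)\frac{2d_1+d_2 k}{d_2}\omega_0\omega_2\,A
\end{equation*}
is negative. Hence $\omega(A,k,\cdot)$ has real roots in $l$ precisely when $A\leq \Omega_{d_1,d_2}(k)$, and the proposition asserts that whenever the super-level set $S_A:=\{k\in(0,1):\Omega_{d_1,d_2}(k)\geq A\}$ contains some $\tilde k$, it contains all of $[\tilde k,1)$. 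Since $A<\Omega_{d_1,d_2}(1)$ by hypothesis, continuity already places a left-neighborhood of $1$ inside $S_A$; what remains is to prove $S_A$ is connected, which amounts to ruling out a dip of $\Omega_{d_1,d_2}$ below $A$ strictly between $\tilde k$ and $1$.

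I would establish connectedness of $S_A$ by showing that $\Omega_{d_1,d_2}$ is monotonically non-decreasing on $(0,1)$, or at minimum is unimodal with unique maximum at $k=1$ (consistent with \eqref{eqn_max_Omega}). Either property makes every super-level set an interval and yields the required $S_A=[k_A,1)$ structure. Using the explicit polynomial expressions for $\omega_0,\omega_1,\omega_2$ in the appendix, the monotonicity question reduces to showing that the numerator of $\frac{d}{dk}\Omega_{d_1,d_2}$ — equivalently, the numerator of its logarithmic derivative, a rational combination of $\omega_1'/\omega_1$, $\omega_0'/\omega_0$, $\omega_2'/\omega_2$, and $d_2/(2d_1+d_2k)$ — has constant sign on $(0,1)$.

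The main obstacle is precisely this polynomial positivity check. After clearing denominators one obtains a polynomial in $k$ of relatively high degree with coefficients polynomial in $d_1,d_2$, and establishing its non-negativity on $(0,1)$ will likely require careful factorization together with case analysis on the small parameters. The exclusion $(d_1,d_2)=(2,2)$ in the hypothesis — mirroring the same exclusion in \eqref{eqn_max_Omega} — strongly suggests that the positivity argument genuinely degenerates in that boundary case. If outright monotonicity proves too strong, a fallback is to rule out interior critical points of $\Omega_{d_1,d_2}$ whose critical values lie above $\Psi_{d_1,d_2}$; combined with the established max-at-$k=1$ property, this yields unimodality of $\Omega_{d_1,d_2}$ in the relevant height range and hence the connectedness of $S_A$.
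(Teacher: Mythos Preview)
Your reduction is exactly the paper's: $\omega(A,k,\cdot)$ has real roots iff $A\le\Omega_{d_1,d_2}(k)$, and the proposition becomes a statement about the structure of the super-level set $S_A$. The derivative computation you propose is also what the paper does (it is precisely Proposition~\ref{prop_property of Omega} in the Appendix). However, your specific shape conjectures are wrong, and your fallback does not close the gap.

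For $d_1\in\{2,3\}$ and $d_2\ge 3$, Proposition~\ref{prop_property of Omega} shows $\Omega_{d_1,d_2}$ \emph{first decreases and then increases} on $(0,1)$: it is valley-shaped, not monotone and not unimodal-with-peak. For a valley-shaped function, super-level sets are in general a union of two intervals (one near each endpoint), so neither of your two primary hypotheses holds, and ``either property makes every super-level set an interval'' is false in these cases. Your fallback, ruling out interior critical points with critical value above $\Psi_{d_1,d_2}$, is automatically satisfied here---the only interior critical point is a minimum---yet it does not by itself force $S_A$ to be connected: if $\Omega_{d_1,d_2}(0)>A$ then $S_A$ picks up a second component near $k=0$, and a $\tilde k$ in that component would falsify the conclusion.

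The missing ingredient is an endpoint check, not a critical-point check: one needs $\Omega_{d_1,d_2}(0)\le\Psi_{d_1,d_2}\le A$, which the paper verifies directly in \eqref{eqn_Omega and Psi}. Combined with the valley (or monotone) shape from Proposition~\ref{prop_property of Omega}, this gives a unique $k_\bullet\in(0,1)$ with $\Omega_{d_1,d_2}(k_\bullet)=\Psi_{d_1,d_2}$ and $\Omega_{d_1,d_2}$ increasing on $(k_\bullet,1)$; since $\Omega_{d_1,d_2}(\tilde k)\ge A\ge\Psi_{d_1,d_2}$ one gets $\tilde k\ge k_\bullet$, and monotonicity on $[k_\bullet,1)$ then yields $[\tilde k,1)\subset S_A$. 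So your plan is on the right track but needs this one additional estimate to go through.
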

\begin{proof}
Recall the function $\Omega_{d_1,d_2}$ in \eqref{eqn_Omega}.
Since $\omega(A,\tilde{k},l)$ has real roots, it is clear that $\Omega_{d_1,d_2}(\tilde{k})\geq A$. From Proposition \ref{prop_property of Omega} in the Appendix, the function $\Omega_{d_1,d_2}$ either monotonically increases or first decreases and then increases on $(0,1)$. As
\begin{equation}
\label{eqn_Omega and Psi}
\Omega_{d_1,d_2}(0)=\frac{4(d_1-1)}{d_1^2(d_1d_2-d_2+4)}\frac{d_2(d_2-1)^2}{4(d_1-1)}\leq \Psi_{d_1,d_2},
\end{equation}
there exists a unique $k_\bullet\in(0,1)$ such that $\Omega_{d_1,d_2}(k_\bullet)=\Psi_{d_1,d_2}$ and $\Omega_{d_1,d_2}$ monotonically increases on $(k_\bullet,1)$. By $\Omega_{d_1,d_2}(\tilde{k})\geq A\geq \Psi_{d_1,d_2}=\Omega_{d_1,d_2}(k_\bullet)$, we must have $\tilde{k}\in [k_\bullet,1)$. Furthermore, we have 
$$\Omega_{d_1,d_2}(k)\geq \Omega_{d_1,d_2}(\tilde{k})\geq A$$ for any $k\in [\tilde{k},1)$. Therefore, the discriminant of $\omega(A,k,l)$ is non-negative for any $k\in (\tilde{k},1)$. The proof is complete.
\end{proof}

\begin{proposition}
\label{prop_initial inclusion}
Let $(d_1,d_2)\notin\{(2,2),(2,3),(2,4)\}$. For any $A\in \left[\Psi_{d_1,d_2},\frac{1}{n+d_1}\frac{d_2(d_2-1)^2}{4(d_1-1)}\right)$, there exists a sufficiently small $\delta>0$ so that $\omega(A,1-\delta,l)$ have real roots $\mu_2(A,1-\delta)<\mu_1(A,1-\delta)$ and $P_Y(A,1-\delta,l)$ have real roots $\psi_2(A,1-\delta)< \psi_1(A,1-\delta)$. Furthermore, we have the inclusion  $[\mu_2(A,1-\delta), \mu_1(A,1-\delta)]\subset (\psi_2(A,1-\delta), \psi_1(A,1-\delta))$.
\end{proposition}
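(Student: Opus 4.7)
The plan exploits the fact that both $\omega(A,1,l)$ and $P_Y(A,1,l)$ degenerate to share the same pair of roots at $k=1$, then tracks how these roots split as $k$ decreases slightly. First I would note that $P_X(1) = 0$ due to the factor $1-k$ in $P_X$, so $\omega(A,1,l) = -Q_X(1)\, P_Y(A,1,l)$. The identity $P_Y(A,k,\mu_i)\, Y^2 = -\frac{1-k}{n-1} R_1$ established in the proof of Proposition~\ref{prop_P<0 on E2} holds for either root $\mu_i$ of \eqref{eqn_homogeneous metric polynomial} (the derivation used only $R_1 = R_2$), so $P_Y(A,1,\mu_i) = 0$. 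Since $A<\frac{1}{n+d_1}\frac{d_2(d_2-1)^2}{4(d_1-1)}$, the two roots $\mu_2 < \mu_1$ are distinct; Vieta against $P_Y(A,1,0) = -d_1(d_1-1)/(n-1)$ and $\mu_1 \mu_2 = d_2(d_1-1)/((n+d_1)A)$ identifies the leading coefficient $P_{Y,2}(1)\, A = -d_1(n+d_1)A/(d_2(n-1)) < 0$, so $P_Y(A,1,\cdot)$ is concave down. Both properties persist for $k = 1-\delta$ with $\delta$ small, and by continuity of the discriminants in $k$ the quadratics $\omega(A,1-\delta,\cdot)$ and $P_Y(A,1-\delta,\cdot)$ retain distinct real roots, which I denote $\mu_2(A,1-\delta) < \mu_1(A,1-\delta)$ and $\psi_2(A,1-\delta) < \psi_1(A,1-\delta)$, all four converging to $\mu_2 < \mu_1$ as $\delta \to 0$.

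To pin down the order of $\mu_i$ versus $\psi_i$, implicit differentiation of $\omega(A,k,\mu_i(A,k)) \equiv 0$ and $P_Y(A,k,\psi_i(A,k)) \equiv 0$ at $k=1$, together with $\omega = Q_Y P_X - Q_X P_Y$ and the vanishings $P_X(1) = P_Y(A,1,\mu_i) = 0$, yields
\begin{equation*}
\left.\frac{d(\mu_i - \psi_i)}{dk}\right|_{k=1} = \frac{Q_Y(A,1,\mu_i)\, P_X'(1)}{Q_X(1)\, \partial_l P_Y(A,1,\mu_i)}.
\end{equation*}
By Proposition~\ref{prop_px is negative somewhere}, $P_X'(1) = -\tilde P_X(1)/(d_1(n-1)) < 0$; by concavity, $\partial_l P_Y(A,1,\mu_1) < 0 < \partial_l P_Y(A,1,\mu_2)$. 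Matching signs, the desired strict inclusion $\psi_2 < \mu_2 < \mu_1 < \psi_1$ at $k = 1 - \delta$ reduces to the single sign claim
\begin{equation*}
\frac{Q_Y(A,1,\mu_i)}{Q_X(1)} > 0, \quad i = 1, 2.
\end{equation*}

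To verify the sign, I would use the conservation law \eqref{eqn_conservation equation for 1} at $X_1 = X_2 = x$: it forces $d_1 R_1 + d_2 R_2 = (n-1)/n$ and, combined with $R_1 = R_2$ at $l = \mu_i$, gives $R_1 = (n-1)/n^2$, which in turn pins down $Y^2$ in terms of $\mu_i$ via $Y^2[(d_1-1) + A\mu_i^2] = (n-1)/n^2$. Substituting this normalization and the quadratic relation $(n+d_1)A\mu_i^2 = d_2(d_2-1)\mu_i - d_2(d_1-1)$ into the appendix formulas for $Q_Y$ and $Q_X$ reduces the positivity claim to an explicit polynomial inequality in $d_1, d_2$ and $A$. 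The main obstacle is verifying this inequality sharply, since the excluded cases $(d_1,d_2) \in \{(2,2), (2,3), (2,4)\}$ strongly suggest the desired sign genuinely fails there. I would handle the generic regime by extracting the leading monomial behavior in $d_2 \geq d_1 \geq 2$ and dispose of the remaining small $(d_1,d_2)$ by direct evaluation at the endpoints of the admissible $A$-interval.
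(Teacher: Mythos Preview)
Your implicit-differentiation approach is a legitimate alternative to the paper's argument, and the reduction to the sign of $Q_Y(A,1,\mu_i)/Q_X(1)$ is correctly derived. Since $Q_X(1)<0$, the claim amounts to $Q_Y(A,1,\mu_i)<0$ for $i=1,2$. This does hold on the \emph{open} interval $A\in\bigl(\Psi_{d_1,d_2},\tfrac{1}{n+d_1}\tfrac{d_2(d_2-1)^2}{4(d_1-1)}\bigr)$, and the dimension exclusion $2d_1n-3n-3d_1>0$ enters exactly where you expect, at the upper $A$-endpoint where $\mu_1=\mu_2$.

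The genuine gap is at the left endpoint $A=\Psi_{d_1,d_2}$. By the resultant computation in Proposition~\ref{prop_resultant py qy}, $\Psi_{d_1,d_2}=(\rho_0/\rho_1)(1)$ is precisely the value at which $\mathrm{Res}_l(Q_Y,P_Y)|_{k=1}$ vanishes; the paper identifies the shared root as $\mu_2$, so $Q_Y(\Psi_{d_1,d_2},1,\mu_2)=0$. Your first-order separation $\left.\tfrac{d(\mu_2-\psi_2)}{dk}\right|_{k=1}$ therefore vanishes, and the strict inclusion cannot be read off from first-order data. A second-order expansion in $k$ would be needed, and your proposed verification (``reduce to an explicit polynomial inequality'') will not produce a strict sign there.

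The paper sidesteps this degeneracy by a different mechanism. Instead of differentiating the roots, it compares the two discriminant thresholds $\Omega_{d_1,d_2}(k)$ and $\Xi_{d_1,d_2}(k)$: both equal $\tfrac{1}{n+d_1}\tfrac{d_2(d_2-1)^2}{4(d_1-1)}$ at $k=1$, and the dimension restriction is exactly $\Omega'(1)>\Xi'(1)$, so that $A<\Omega(1-\delta)<\Xi(1-\delta)$ for small $\delta$. This guarantees that as $A$ is pushed up, $\omega$'s roots coalesce before $P_Y$'s do. The six possible orderings are then eliminated by an intermediate-value argument against Proposition~\ref{prop_resultant py qy}, which asserts $\mathrm{Res}_l(\omega,P_Y)\neq 0$ for every $k\in(0,1)$ whenever $A\ge \Psi_{d_1,d_2}$ (the inequality is non-strict, so the endpoint is covered). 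Three orderings are killed by varying $A$ upward, two by varying $k$ toward $1$; only $\psi_2<\mu_2\le\mu_1<\psi_1$ survives. This argument is uniform in $A$ and never needs the strict sign that your method loses at $A=\Psi_{d_1,d_2}$.
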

\begin{proof}
%From Proposition \ref{prop_property of Omega}, the function $\Omega_{d_1,d_2}$ either monotonically increases or it decreases and then increases. As
%$$\Omega_{d_1,d_2}(0)=\frac{4(d_1-1)}{d_1^2(d_1d_2-d_2+4)}\frac{d_2(d_2-1)^2}{4(d_1-1)}\leq \Psi_{d_1,d_2},$$ there exists a unique $k_\bullet\in(0,1)$ such that $\Omega_{d_1,d_2}(k_\bullet)=\Psi_{d_1,d_2}$ and $\Omega$ monotonically increases on $(k_\bullet,1)$. Then with $A\in \left[\Psi_{d_1,d_2},\frac{1}{n+d_1}\frac{d_2(d_2-1)^2}{4(d_1-1)}\right)$, there exists a $\tilde{k}\in [k_\bullet,1)$ such that each $\omega(A,k,l)$ with $k\in [\tilde{k},1]$ has real roots.

For each fixed $k\in (0,1)$, the polynomial $P_Y$ is concave down and $P_Y(A,k,0)<0$. Compute the discriminant for $P_Y$. The polynomial $P_Y$ has real roots if $A\leq \Xi_{d_1,d_2}(k)$, where
$$
\Xi_{d_1,d_2}(k):=\frac{d_2(d_2-1)^2(d_1+d_2k-1)^2}{4(d_1-1)(d_1+kd_2-k)(d_1(n+d_1-2)+d_2(n+d_1-1)k)}.
$$
We have $\Xi_{d_1,d_2}(1)=\Omega_{d_1,d_2}(1)=\frac{1}{n+d_1}\frac{d_2(d_2-1)^2}{4(d_1-1)}$ and 
$$\frac{d \Xi_{d_1,d_2}}{dk}(1)=\frac{d_2n(d_2-1)^2}{2(d_1-1)(n-1)(2d_1+d_2)^2},\quad \frac{d\Omega_{d_1,d_2}}{dk}(1)=\frac{n(d_2-1)^2(d_1^2+d_1d_2-3d_1-d_2)}{(d_1-1)(n-1)(2d_1+d_2)^2}.$$
If $(d_1,d_2)\notin\{(2,2),(2,3),(2,4)\}$, then $\frac{d\Omega_{d_1,d_2}}{dk}(1)>\frac{d\Xi_{d_1,d_2}}{dk}(1)$. Hence, for these dimensions, there exists a small enough $\delta>0$ such that 
$$A<\Omega_{d_1,d_2}(k)< \Xi_{d_1,d_2}(k)<\Omega_{d_1,d_2}(1)=\Xi_{d_1,d_2}(1)$$ for any $k\in [1-\delta,1]$. Therefore, both $\mu_i(A,k)$ and $\psi_i(A,k)$ exist for any $k\in [1-\delta,1]$.

There are six possibilities for $\mu_i(A,1-\delta)$ and $\psi_i(A,1-\delta)$.
$$
\mu_2<\psi_2\leq \psi_1<\mu_1,\quad \mu_2<\psi_2<\mu_1<\psi_1,\quad \psi_2<\mu_2<\psi_1<\mu_1,
$$
$$
\quad \mu_2\leq \mu_1<\psi_2\leq \psi_1,\quad \psi_2\leq\psi_1<\mu_2\leq\mu_1,\quad \psi_2<\mu_2\leq \mu_1<\psi_1.
$$
As $\Omega_{d_1,d_2}(1-\delta)< \Xi_{d_1,d_2}(1-\delta)$, increasing $A$ to $\Omega_{d_1,d_2}(1-\delta)$ makes $\mu_2=\mu_1$ while $\psi_2\leq \psi_1$ still exist. Hence, the first three cases violate Proposition \ref{prop_resultant py qy}. Since $P_X(1)=0$, we have $\omega(A,1,l)=-Q_X(1)P_Y(A,1,l)$ and $\mu_i(A,1)=\psi_i(A,1)$. Therefore, the fourth and the fifth cases also contradict Proposition \ref{prop_resultant py qy} by increasing $k$ from $1-\delta$ to $1$. Only the last case stays valid.
\end{proof}

Finally, we consider $\mathcal{E}^+_0\cap \{P=0\}$.
\begin{proposition}
\label{prop_Q<0 on E0}
Let $(d_1,d_2)\notin\{(2,2),(2,3),(2,4)\}$. If $A\in \left[\Psi_{d_1,d_2},\frac{1}{n+d_1}\frac{d_2(d_2-1)^2}{4(d_1-1)}\right)$, then $\omega<0$ on $\mathcal{E}^+_0\cap \{P=0\}$.
\end{proposition}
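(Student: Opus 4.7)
The plan is to slice $\mathcal{E}^+_0$ by $k\in(0,1)$ and, on each slice, regard $\omega(A,k,\cdot)$ and $P_Y(A,k,\cdot)$ as quadratics in $l$. Both are concave down: for $\omega$ because $(2d_1+d_2k)\omega_2(k)<0$ by Proposition \ref{prop_omega2}, and for $P_Y$ by the same sign analysis used in the proof of Proposition \ref{prop_roots of omega make Py>0}, since the threshold $-d_1(n+d_1-2)/[d_2(n+d_1-1)]$ there is negative. On any slice where $\omega$ has no real roots, concavity gives $\omega<0$ identically. On a slice where $\omega$ has real roots $\underline{\mu}(k)\leq\bar{\mu}(k)$, I aim to show $P_Y$ also has real roots $\underline{\psi}(k)<\bar{\psi}(k)$ and the strict inclusion $[\underline{\mu},\bar{\mu}]\subset(\underline{\psi},\bar{\psi})$. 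Granted this, concavity yields $P_Y>0$ on $[\underline{\mu},\bar{\mu}]$, so wherever $P_Y<0$ one has $l\notin[\underline{\mu},\bar{\mu}]$ and hence $\omega<0$. Since $\{P=0\}\cap\mathcal{E}^+_0$ forces $P_Y<0$ via $P=P_X X_1^3+P_Y Y^2 X_1=0$ with $P_X,X_1>0$, this closes the reduction.

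The core of the proof is the strict inclusion. The set $K_0=\{k\in(0,1):\Omega_{d_1,d_2}(k)\geq A\}$ is, by Proposition \ref{prop_prop of Omega 2} together with the monotonicity analysis of $\Omega_{d_1,d_2}$, a half-open interval $[k_{A,\bullet},1)$ on whose interior $\omega$ has distinct real roots. Let $S\subset(k_{A,\bullet},1)$ be the set of $k$ for which $P_Y(A,k,\cdot)$ has real roots and $[\underline{\mu},\bar{\mu}]\subset(\underline{\psi},\bar{\psi})$ strictly. Proposition \ref{prop_initial inclusion} provides $[1-\delta,1)\subset S$, so $S\neq\emptyset$. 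Continuous dependence of the four roots on $k$ makes $S$ open in $(k_{A,\bullet},1)$. For closedness, take $k_n\in S$ with $k_n\to k^\ast\in(k_{A,\bullet},1)$; the weak limit inclusion $\underline{\psi}(k^\ast)\leq\underline{\mu}(k^\ast)\leq\bar{\mu}(k^\ast)\leq\bar{\psi}(k^\ast)$ persists, and failure of strictness falls into three modes: $\underline{\mu}(k^\ast)=\underline{\psi}(k^\ast)$, $\bar{\mu}(k^\ast)=\bar{\psi}(k^\ast)$, or the simultaneous squeeze $\underline{\psi}(k^\ast)=\bar{\psi}(k^\ast)$ pulling $\underline{\mu}(k^\ast)=\bar{\mu}(k^\ast)$ to the same value. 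Each produces a common root of $\omega$ and $P_Y$, contradicting Proposition \ref{prop_resultant py qy} under $A\geq\Psi_{d_1,d_2}$. By connectedness, $S=(k_{A,\bullet},1)$.

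The boundary slice $k=k_{A,\bullet}$ needs a separate check: $\omega$ has a double root $l_0$ there, and the one-sided limit from $S$ gives $l_0\in[\underline{\psi}(k_{A,\bullet}),\bar{\psi}(k_{A,\bullet})]$, upgraded to strict interior by Proposition \ref{prop_resultant py qy}. Thus $P_Y(A,k_{A,\bullet},l_0)>0$, and the only candidate point on this slice is excluded from $\{P=0\}\cap\mathcal{E}^+_0$. Combining slices where $\omega$ lacks real roots, the interior slices of $K_0$ treated via $S$, and the boundary slice yields $\omega<0$ throughout $\mathcal{E}^+_0\cap\{P=0\}$.

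I expect the main obstacle to be the closedness half of the continuity argument, where four roots move together and every degenerate limit configuration must be excluded by a single application of Proposition \ref{prop_resultant py qy}. The subtlest case is the simultaneous squeeze, in which the $P_Y$-discriminant vanishes at $k^\ast$ and drags $\omega$'s roots into coincidence with the $P_Y$ double root.
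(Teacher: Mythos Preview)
Your proof is correct and follows essentially the same approach as the paper: both slice by $k\in(0,1)$, use the concavity of $\omega$ and $P_Y$ in $l$, establish the root-inclusion $[\underline{\mu},\bar{\mu}]\subset(\underline{\psi},\bar{\psi})$ by combining Propositions~\ref{prop_resultant py qy}, \ref{prop_prop of Omega 2}, and \ref{prop_initial inclusion}, and then conclude $\omega<0$ wherever $P_Y<0$. Your open--closed connectedness argument on $S$ is a more explicit rendering of the paper's terse propagation of the inclusion from $k=1-\delta$ down to an arbitrary $\tilde{k}$, and you additionally handle the boundary slice $k=k_{A,\bullet}$ (where $\omega$ has only a double root) separately, a point the paper leaves implicit.
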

\begin{proof}
Suppose for some $\tilde{A}\in \left[\Psi_{d_1,d_2},\frac{1}{n+d_1}\frac{d_2(d_2-1)^2}{4(d_1-1)}\right)$ and $\tilde{k}\in(0,1)$, the polynomial $\omega(\tilde{A},\tilde{k},l)$ is positive for some $l$. From Proposition \ref{prop_omega2} and Proposition \ref{prop_omega0}, the quadratic function $\omega(\tilde{A},\tilde{k},l)$ has two positive roots $\mu_2(\tilde{A},\tilde{k})\leq \mu_1(\tilde{A},\tilde{k})$. By Proposition \ref{prop_prop of Omega 2}, the function $\omega(\tilde{A}, k ,l)$ has two real roots for any $k\in [\tilde{k},1)$. Furthermore, by Proposition \ref{prop_initial inclusion}, there is a $\delta>0$ small enough so that 
\begin{equation}
\label{eqn_inclusion 1-delta}
[\mu_2(\tilde{A},1-\delta), \mu_1(\tilde{A},1-\delta)]\subset (\psi_2(\tilde{A},1-\delta), \psi_1(\tilde{A},1-\delta)).
\end{equation}
Suppose $P_Y(\tilde{A},\tilde{k},l)$ does not have real roots, then Proposition \ref{prop_resultant py qy} is violated by $\omega(\tilde{A},k,l)$ and $P_Y(\tilde{A},k,l)$ for some $k\in (\tilde{k},1-\delta)$. Therefore, the polynomial $P_Y(\tilde{A},\tilde{k},l)$ also has real roots $\psi_2(\tilde{A},\tilde{k})\leq \psi_1(\tilde{A},\tilde{k})$. By Proposition \ref{prop_resultant py qy} and \eqref{eqn_inclusion 1-delta}, we also have the inclusion
$$[\mu_2(\tilde{A},\tilde{k}), \mu_1(\tilde{A},\tilde{k})]\subset (\psi_2(\tilde{A},\tilde{k}), \psi_1(\tilde{A},\tilde{k})).$$ Equivalently, the inequality $\omega(\tilde{A},\tilde{k},l)< 0$ holds if $P_Y(\tilde{A},\tilde{k},l)\leq 0$. As $P_X>0$ on $(0,1)$, we must have $P_Y<0$ from $P=0$. By the arbitrariness of $\tilde{A}$ and $\tilde{k}$, we conclude that $\omega$ is negative on $\mathcal{E}^+_0\cap \{P=0\}$.
\end{proof}

\begin{lemma}
\label{lem_gamma in E0 is clear}
For any $d_2\geq d_1\geq 2$ with $(d_1,d_2)\notin\{(2,2),(2,3),(2,4)\}$ and $A\in \left[\Psi_{d_1,d_2},\frac{1}{n+d_1}\frac{d_2(d_2-1)^2}{4(d_1-1)}\right)$, an integral curve in $\mathfrak{C}_A$ can only escape the set through the faces $\{H=0\}\cap\{X_1>0\}$ or $\{Z-\mu_1 Y=0\}\cap \{X_1-X_2>0\}$.
\end{lemma}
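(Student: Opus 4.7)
The plan is to enumerate the faces of $\partial \mathfrak{C}_A$ and rule out escape through every face except the two stated ones. Since $\mathfrak{C}_A = \left(\mathcal{E}^+\right)^\circ \cap \{P<0\}$, the boundary lies in $\{P=0\}\cup\partial\mathcal{E}^+$, where $\partial\mathcal{E}^+$ is built from the faces $\{Y=0\}$, $\{Z=0\}$, $\{H=0\}$, $\{H=1\}$, $\{X_1-X_2=0\}$, and $\{Z-\mu_1 Y=0\}$. Thus it suffices to exclude escape through $\{P=0\}$, $\{X_1-X_2=0\}$, $\{Y=0\}$, $\{Z=0\}$, and $\{H=1\}$.

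First I would dispose of the trivially invariant faces. Since $Y'=Y(\cdots)$ and $Z'=Z(\cdots)$ in \eqref{eqn_New Einstein equation}, the sets $\{Y=0\}$ and $\{Z=0\}$ are invariant, so any integral curve emerging into the interior from $p_0^+$ stays in $\{Y>0,Z>0\}$ for all finite $\eta$. For $\{H=1\}$, a direct calculation using \eqref{eqn_conservation equation for 1} yields $H' = -(1-H^2)\!\left(G+\tfrac{1-H^2}{n}\right)\leq 0$ on $\mathcal{E}$, with strict inequality whenever $H^2<1$; hence $H$ is strictly decreasing along any non-stationary trajectory and never returns to $H=1$ after leaving $p_0^+$.

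The key step is ruling out escape through $\{P=0\}$. At a boundary point, \eqref{eqn_P'} reduces to $P'=(X_1-X_2)Q$. Each slice $\mathcal{E}_{k,l}\subset\left(\mathcal{E}^+\right)^\circ$ satisfies $X_1>0$ and $k<1$, so $X_1-X_2>0$. On $\mathcal{E}^+_2$, Proposition \ref{prop_P<0 on E2} already forces $P<0$ strictly, so the intersection with $\{P=0\}$ is empty. On $\left(\mathcal{E}^+_1\cup\mathcal{E}^+_0\right)\cap\{P=0\}$ we have $P_X>0$ by Proposition \ref{prop_px is negative somewhere}, so the identity \eqref{eqn_Q and omegaA} shows that $Q$ and $\omega$ carry the same sign; Propositions \ref{prop_Q<0 on E1} and \ref{prop_Q<0 on E0} then supply $\omega<0$, hence $Q<0$ and $P'<0$. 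Thus $P$ strictly decreases through zero at every such boundary point, preventing any trajectory inside $\{P<0\}$ from exiting through $\{P=0\}$.

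Finally I would rule out $\{X_1-X_2=0\}$. At any point in $\overline{\mathfrak{C}_A}$ with $X_1=X_2>0$, the formula \eqref{eqn_barrier functions} reduces to $P=X_1(R_2-R_1)$; hence $P\leq 0$ forces $R_1\geq R_2$, which via \eqref{eqn_homogeneous metric polynomial} means $l\leq\mu_2$ or $l\geq\mu_1$. Because $\mathcal{E}^+$ enforces $l\leq\mu_1$, only $l\leq\mu_2$ or $l=\mu_1$ is possible, and the case $l=\mu_1$ gives $R_1=R_2$ and $P=0$, which was already excluded in the previous paragraph. Otherwise $l<\mu_2$ strictly (since $P<0$ implies $R_1>R_2$), and \eqref{eqn_deri of X1-X2} yields $(X_1-X_2)'=R_1-R_2>0$, contradicting an integral curve exiting $\{X_1-X_2>0\}$ through $\{X_1=X_2\}$. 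The main obstacle throughout is the $\{P=0\}$ step, which rests on the full discriminant and resultant machinery of Section \ref{sec_Barrier of non-existence: Part II}; once that is in place, the remaining faces are short reductions.
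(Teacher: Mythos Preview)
Your enumeration misses the corner $\Gamma:=\mathcal{E}\cap\{X_1=X_2=0\}$, and this is not a cosmetic oversight: it is the technical heart of the lemma. The face $\{H=0\}$ in $\overline{\mathcal{E}^+}$ splits as $\{H=0,\,X_1>0\}\sqcup\Gamma$, and the lemma asserts escape only through the first piece, so $\Gamma$ must be excluded. Your handling of $\{X_1-X_2=0\}$ explicitly assumes $X_1=X_2>0$, and your $\{P=0\}$ argument lives in $\left(\mathcal{E}^+\right)^\circ$, so neither covers $\Gamma$. The first-order arguments you use elsewhere all collapse there: from \eqref{eqn_barrier functions} one has $P|_\Gamma=0$, and a direct computation via \eqref{eqn_P'} gives $P'|_\Gamma=P''|_\Gamma=0$ as well, so you cannot get a sign contradiction from $P'$ or even $P''$.

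The paper closes this gap by computing the third derivative. Using \eqref{eqn_P'} and \eqref{eqn_P and Q in X and Y} one finds $P'''|_\Gamma=2(R_1-R_2)\,Q'|_\Gamma$ with $Q'|_\Gamma=Q_Y(A,k,l)\,Y^2\bigl(R_1-\tfrac{1}{n}\bigr)$. Any curve in $\mathfrak{C}_A$ reaching $\Gamma$ must do so inside $\{R_1-R_2<0\}$ (since $X_1-X_2>0$ and $\left(\tfrac{Z}{Y}\right)'>0$ force $l\in(\mu_2,\mu_1)$ there), and on $\Gamma$ the conservation law \eqref{eqn_conservation equation for 1} forces $R_1-\tfrac{1}{n}<0$. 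The remaining work is to show $Q_Y(A,k,l)<0$ for all $k\in\bigl[-\tfrac{d_1}{d_2},1\bigr]$ and $l\in(\mu_2,\mu_1)$ under the hypothesis on $A$; since $Q_Y$ is linear in $k$ this reduces to the endpoints $k=1$ and $k=-\tfrac{d_1}{d_2}$, and the paper verifies these using Proposition~\ref{prop_resultant py qy} and the identity \eqref{eqn_Q_Y at k=-d1/d2}. The upshot is $P'''|_\Gamma<0$, which is incompatible with $P<0$ approaching a point where $P=P'=P''=0$. Without this step your argument only proves escape through $\{H=0\}$ (unrestricted), which is too weak for the $\mathbb{Z}_2$-symmetry application in the proof of Theorem~\ref{thm_main}.
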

\begin{proof}
From Proposition \ref{prop_P<0 on E2}, Proposition \ref{prop_Q<0 on E1} and Proposition \ref{prop_Q<0 on E0}, an integral curve in $\mathfrak{C}_A$ does not escape the set through the face $\left(\mathcal{E}^+\right)^\circ\cap \{P=0\}$. An integral curve in $\mathfrak{C}_A$ does not intersect $\{Y=0\}$ or $\{Z=0\}$ since these sets are invariant. Suppose an integral curve in $\mathfrak{C}_A$ escapes through the face $\{X_1-X_2=0\}\cap \{X_1>0\}$. By $P\leq 0$, the inequality $(X_1-X_2)'\geq 0$ holds at the intersection point. This is a contradiction as $X_1-X_2>0$ holds while the integral curve is in $\mathfrak{C}_A$. Therefore, an integral curve in $\mathfrak{C}_A$ can only escape the set through $\{H=0\}$ or $\{Z-\mu_1 Y=0\}\cap \{X_1-X_2>0\}$.

We claim that it is impossible that an integral curve in $\mathfrak{C}_A$ intersects 
$$\Gamma:= \mathcal{E}\cap \{X_1=X_2=0\}.$$
Suppose otherwise, the integral curve must intersect $\Gamma\cap \{R_1-R_2<0\}$. Both $P$ and $Q$ vanish at $\Gamma$ apparently. Straightforward computations show that $\left.P'\right|_{\Gamma}=\left.P''\right|_{\Gamma}=0$. Let the integral curve intersects $\Gamma\cap \{R_1-R_2<0\}$ along $kX_1=X_2$ for some $k\in \left[-\frac{d_1}{d_2},1\right]$. Then $k\left(R_1-\frac{1}{n}\right)=R_2-\frac{1}{n}$ at the intersection point. From \eqref{eqn_P'} and \eqref{eqn_P and Q in X and Y}, we obtain
\begin{equation}
\begin{split}
\left.P'''\right|_{\Gamma}&=2\left.\left((X_1-X_2)'Q'\right)\right|_{\Gamma}=2\left.\left((R_1-R_2)Q'\right)\right|_{\Gamma},\\
\left.Q'\right|_{\Gamma}&=Q_Y(A,k,l)Y^2\left(R_1-\frac{1}{n}\right).
\end{split}
\end{equation}

We show that $\left.P'''\right|_{\Gamma\cap \{R_1-R_2<0\}}<0$ for $k\in \left[-\frac{d_1}{d_2},1\right]$ in the following. By the definition of $\mathcal{E}$, we have
\begin{equation}
\label{eqn_sum of secitonal curvature}
\begin{split}
n\left(R_1-\frac{1}{n}\right)\leq d_1\left(R_1-\frac{1}{n}\right)+d_2\left(R_2-\frac{1}{n}\right)=-\frac{1}{n}\\
\end{split}
\end{equation}
on $\Gamma$. The inequality $R_1-\frac{1}{n}<0$ holds on $\Gamma\cap \{R_1-R_2<0\}$. Therefore, the inequality $\left.P'''\right|_{\Gamma\cap \{R_1-R_2<0\}}<0$ holds if and only if $Q_Y(A,k,l)<0$ for any $(A,l)\in \left[\Psi_{d_1,d_2},\frac{1}{n+d_1}\frac{d_2(d_2-1)^2}{4(d_1-1)}\right)\times (\mu_2,\mu_1)$. Since the function $Q_Y(A,k,l)$ is linear in $k$ (see \eqref{eqn_QY}), it suffices to check the negativity of $Q_Y\left(A,-\frac{d_1}{d_2},l\right)$ and $Q_Y\left(A,1,l\right)$. 

By the formula of $Q_Y(A,1,l)$, the quadratic function always has two roots with opposite signs. On the other hand, we have $P_Y(A,1,l)Y^2=R_2-R_1$. If $A=\frac{1}{n+d_1}\frac{d_2(d_2-1)^2}{4(d_1-1)}$, then $\mu_1=\mu_2=\frac{2(d_1-1)}{d_2-1}$. Given $(d_1,d_2)\notin\{(2,2),(2,3),(2,4)\}$, we have
$$
Q_Y\left(\frac{1}{n+d_1}\frac{d_2(d_2-1)^2}{4(d_1-1)},1,\frac{2(d_1-1)}{d_2-1}\right)=\frac{-2(d1 - 1)(2d_1n-3n-3d_1)n}{(n+d_1)(n - 1)d_1}<0.
$$
If $A=\Psi_{d_1,d_2}$, the positive root of $Q_Y(\Psi_{d_1,d_2},1,l)$ is $\mu_2$ the smaller root of $P_Y(\Psi_{d_1,d_2},1,l)$. Proposition \ref{prop_resultant py qy} and \eqref{eqn_res_omega_PY} implies that $Q_Y(A,1,l)$ and $P_Y(A,1,l)$ do not share any common root if $A> \Psi_{d_1,d_2}$. Therefore, we have $Q_Y(A,1,l)<0$ for any $(A,l)\in \left[\Psi_{d_1,d_2},\frac{1}{n+d_1}\frac{d_2(d_2-1)^2}{4(d_1-1)}\right)\times (\mu_2,\mu_1)$. On the other hand, by \eqref{eqn_conservation equation for 1} and \eqref{eqn_function Q},
\begin{equation}
\label{eqn_Q_Y at k=-d1/d2}
\begin{split}
Q_Y\left(A,-\frac{d_1}{d_2},l\right)Y^2
%&=-\frac{n+d_1}{d_2n}+(d_2-1)YZ+\frac{2d_1(d_1-1)Y^2}{d_2}\\
%&=-\frac{n+d_1}{d_2n}+\frac{n+d_1}{nd_2}\left(d_1R_1+d_2R_2\right)+\frac{d_1}{n}(R_1-R_2)\\
&=-\frac{n+d_1}{n^2d_2}+\frac{d_1}{n}(R_1-R_2)<0.
\end{split}
\end{equation}
We conclude that $\left.P'''\right|_{\Gamma\cap \{R_1-R_2<0\}}<0$. This is a contradiction as $P<0$ while the integral curve is in $\mathfrak{C}_A$.
\end{proof}

We are ready to prove Theorem \ref{thm_main}.
\begin{proof}[Proof for Theorem \ref{thm_main}]
By \cite[Proposition 4.6]{chi2024existence}, the function $P$ is initially negative along $\gamma_s$ if $\omega(A,k,l)<0$ along the $(k,l)$-projection of $\gamma_s$. Since $A\geq \Psi_{d_1,d_2}>\frac{d_2(d_2-1)^2}{d_1^2(d_1d_2-d_2+4)}$, the function $\omega(A,k,l)$ is negative in a neighborhood around $(0,0)$ in $(k_*,1)\times (0,\mu_1)$ by the second partial test. Therefore, each $\gamma_s$ with $s>0$ is initially in $\mathfrak{C}_A$.

It is obvious that $p_0^-\notin \overline{\mathfrak{C}_A}$. Suppose $\lim\limits_{\eta\to\infty}\gamma_{s_*} = p_0^-$ for some $s_*>0$. The integral curve has to escape $\mathfrak{C}_A$. By Proposition \ref{prop_l gets too large} we know that $\gamma_{s_*}$ does not intersect $\{Z-\mu_1Y=0\}\cap \{X_1-X_2>0\}$. By Lemma \ref{lem_gamma in E0 is clear}, the integral curve $\gamma_{s_*}$ escapes $\mathfrak{C}_A$ through $\{H=0\}\cap \{X_1>0\}$ and eventually intersects $\{X_1-X_2=0\}\cap \{H<0\}$. By the $\mathbb{Z}_2$-symmetry of the dynamical system, there exists a parameter $\bar{s}_*>0$ such that $\gamma_{\bar{s}_*}$ escape $\mathfrak{C}_A$ through $\{X_1-X_2=0\}\cap \{H>0\}$. This is a contradiction to Lemma \ref{lem_gamma in E0 is clear}.
\end{proof}

\section{Appendix}
\subsection{Formula of $P_Y$, $P_X$, $Q_Y$ and $Q_X$}

\begin{equation}
\label{eqn_PY}
\begin{split}
P_Y(A,k,l)&=-\frac{d_1(n+d_1-2)+d_2(n+d_1-1)k}{d_2 (n-1)} Al^2 + \frac{(d_2-1)(d_1+d_2k-1)}{n-1}l\\
&\quad -\frac{(d_1-1)(d_1+d_2 k-k)}{n-1},
\end{split}
\end{equation}
\begin{equation}
\label{eqn_PX}
\begin{split}
P_X(k)&=\frac{d_2(d_1d_2 - 2d_1 - d_2 +1)k^2 + 2(d_2 - 1)(d_1 -1)d_1k + d_1^2(d_1-1)}{d_1(n - 1)}(1 - k),
\end{split}
\end{equation}
\begin{equation}
\label{eqn_QY}
\begin{split}
Q_Y(A,k,l)&=-\frac{2 n k+d_2 k+2 d_1}{n-1}Al^2-\frac{2d_1(d_1-1)+d_1d_2 k-3d_2 k}{d_1(n-1)}(d_2-1)l\\
&\quad +\frac{(2d_1+d_2 k+2 k)(d_1-1)}{n-1},
\end{split}
\end{equation}

\begin{equation}
\label{eqn_QX}
\begin{split}
Q_X(k)&=4k\left(1+\frac{d_2 k}{2d_1}\right)\left(d_1+d_2k+\frac{d_2k}{2d_1}\right)+\left(2+2k+\frac{3d_2k}{d_1}\right)\frac{d_1+d_2k^2-(d_1+d_2k)^2}{n-1}.
\end{split}
\end{equation}

\subsection{Formula of $\omega_i$}
\begin{equation}
\begin{split}
\omega_2(k)&=(2d_1^2d_2^2 - d_1d_2^3 + d_2^3 - d_2^2)k^3 + (4d_1^3d_2 - 4d_1^2d_2^2 - 2d_1^2d_2 + 4d_1d_2^2 - 2d_1d_2)k^2\\
&\quad + (2d_1^4 - 5d_1^3d_2 - 2d_1^3 + 5d_1^2d_2)k - 2d_1^4 + 2d_1^3,\\
\omega_1(k)&=(d_1d_2^3 - 4d_1d_2^2 - d_2^3 + 3d_2^2)k^3 + (2d_1^2d_2^2 - 8d_1^2d_2 + 8d_1d_2 - 2d_2^2)k^2 \\
&\quad + (d_1^3d_2 - 4d_1^3 + 5d_1^2d_2 + 4d_1^2 - 6d_1d_2)k + 4d_1^3 - 4d_1^2,\\
\omega_0(k)&=(d_1d_2^3 - 2d_1d_2^2 - d_2^3 - 2d_1d_2 + d_2^2)k^2 \\
&\quad + (2d_1^2d_2^2 - 2d_1^2d_2 - 2d_1d_2^2 - 4d_1^2 + 4d_1d_2)k + d_1^3d_2 - d_1^2d_2 + 4d_1^2.
\end{split}
\end{equation}

\subsection{Properties of some elementary functions}
\begin{proposition}
\label{prop_omega2}
For any $d_2\geq d_1 \geq 2$, we have $\omega_2<0$ for each $k\in\left[-\frac{d_1}{d_2},1\right]$.
\end{proposition}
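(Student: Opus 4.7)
Since $\omega_2(k)$ is a cubic in $k$, its maximum over $[-d_1/d_2,1]$ is attained either at an endpoint or at an interior critical point. My plan is to show that $\omega_2$ is negative at every such candidate.

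For the endpoints, direct substitution collapses the expression at $k=-d_1/d_2$ to $\omega_2(-d_1/d_2)=-d_1^3/d_2<0$, and collecting the terms of $\omega_2(1)$ as a polynomial in $d_2$ yields
\begin{equation*}
\omega_2(1)=-(d_1-1)d_2^3-\bigl(2(d_1-1)^2-1\bigr)d_2^2-d_1(d_1-1)(d_1-2)d_2,
\end{equation*}
each coefficient of which is non-positive for $d_1\ge 2$, with the $d_2^3$ coefficient strictly negative, so $\omega_2(1)<0$.

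For the critical points, I translate via $u=k+d_1/d_2$, writing
\begin{equation*}
\tilde\omega(u):=\omega_2(u-d_1/d_2)=C_3 u^3+C_2 u^2+C_1 u+C_0,
\end{equation*}
with $C_0=-d_1^3/d_2<0$, $C_1=d_1^2(2d_1+1)>0$, $C_2=-d_1 d_2\bigl[2d_1(d_1+1)+(d_1-1)d_2-1\bigr]<0$, and $C_3=d_2^2(2d_1^2-d_1d_2+d_2-1)$ of variable sign. At any critical point $u^*$, the relation $\tilde\omega'(u^*)=3C_3u^{*2}+2C_2u^*+C_1=0$ lets me eliminate the cubic term, giving
\begin{equation*}
\tilde\omega(u^*)=\tfrac{C_2}{3}u^{*2}+\tfrac{2C_1}{3}u^*+C_0.
\end{equation*}

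Since $C_2<0$, this is a downward parabola in $u^*$, so its global maximum over all $u^*\in\mathbb{R}$ equals $C_0-C_1^2/(3C_2)$. The negativity of this maximum reduces, after cancellation, to the inequality $(2d_1+1)^2<3\bigl[2d_1(d_1+1)+(d_1-1)d_2-1\bigr]$, equivalently $2d_1^2+2d_1-4+3(d_1-1)d_2>0$, which manifestly holds for $d_1\ge 2$ and $d_2\ge 1$. Hence $\tilde\omega(u^*)<0$ at every critical point of $\tilde\omega$, and combined with the endpoint negativity we conclude $\omega_2(k)<0$ on $[-d_1/d_2,1]$. The most delicate step is the elimination leading to the reduced expression for $\tilde\omega(u^*)$: I expect the bookkeeping in computing $C_0-C_1^2/(3C_2)$ to be the bulk of the algebra, but the final inequality is elementary.
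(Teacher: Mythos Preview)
Your argument is correct and considerably cleaner than the paper's. The paper splits the interval: on $[0,1]$ it cites an earlier result, and on $[-d_1/d_2,0)$ it substitutes $k=-\frac{d_1}{d_2}\kappa$, factors $\omega_2$ as $-\frac{d_1^3}{d_2}\kappa^3\tilde\omega_2(\kappa)$, and then locates the critical point $\kappa_*$ of the rational function $\tilde\omega_2$ explicitly via the quadratic formula. This forces a long computation with auxiliary quantities $K_0,K_1,K_2$ involving $\sqrt{(2d_1+d_2)^2+12d_2}$, and the sign analysis of $\tilde\omega_2(\kappa_*)$ splits further into the cases $d_1=d_2$ and $d_1\le d_2-1$.

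Your approach handles the whole interval $[-d_1/d_2,1]$ uniformly. The key idea the paper does not use is the polynomial-division trick: subtracting $\tfrac{u^*}{3}\tilde\omega'(u^*)$ from $\tilde\omega(u^*)$ kills the cubic term and leaves the concave quadratic $\tfrac{C_2}{3}u^{*2}+\tfrac{2C_1}{3}u^*+C_0$, whose vertex value $C_0-C_1^2/(3C_2)$ bounds $\tilde\omega$ at \emph{every} real critical point. Since $C_0,C_2<0$, negativity of this vertex is equivalent to $3|C_0|\,|C_2|>C_1^2$, and after cancelling $d_1^4$ this becomes $(d_1-1)\bigl(2(d_1+2)+3d_2\bigr)>0$, which is immediate. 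This replaces several pages of algebra with a two-line inequality, and the endpoint checks you give are straightforward (your factorisation $\omega_2(1)=-(d_1-1)d_2^3-(2(d_1-1)^2-1)d_2^2-d_1(d_1-1)(d_1-2)d_2$ is correct). The only caveat is that you should remark that if $\tilde\omega$ has no real critical point the cubic is monotone on the interval and the endpoint bounds already suffice; your phrasing ``at every critical point'' implicitly covers this, but making it explicit would remove any ambiguity.
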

\begin{proof}
By \cite[Proposition 4.8]{chi2024existence}, the inequality holds on $[0,1]$.
For $\omega_2(k)$ on $\left[-\frac{d_1}{d_2},0\right)$, consider $k=-\frac{d_1}{d_2}\kappa$. We have
\begin{equation}
\begin{split}
\omega_2\left(-\frac{d_1}{d_2}\kappa\right)=-\frac{d_1^3}{d_2}\kappa^3\tilde{\omega}_2(\kappa),
\end{split}
\end{equation}
where 
$$
\tilde{\omega}_2(\kappa)=1+(d_1 - 1)\frac{(1-\kappa ) ((-2 d_1 + d_2 - 2) \kappa^2 + (2 d_1 - 3 d_2) \kappa + 2 d_2)}{\kappa^3}.
$$
It is easy to verify that for any $d_2\geq d_1\geq 2$, there exists a $\kappa_*\in(0,1)$ such that the derivative 
$$\frac{d\tilde{\omega}_2}{d\kappa}=\frac{2 (d_1 - 1) ((2 d_1-2d_2+1) \kappa^2  - (2 d_1-5d_2) \kappa  - 3 d_2)}{\kappa^4}$$
is negative on $(0,\kappa_*)$ and is positive on $(\kappa_*,1)$. The function $\tilde{\omega}_2$ increases and then decreases on $(0,1)$. We have
$$
\kappa_*=\frac{-2d_1+5d_2-\sqrt{(2d_1+d_2)^2+12d_2}}{4d_2-4d_1-2},
$$
and hence
\begin{equation}
\begin{split}
\tilde{\omega}_2(\kappa_*)&=\frac{K_1-K_2\sqrt{(2d_1+d_2)^2+12d_2}}{K_0^3}\\
&=\frac{1}{K_0^3}\frac{K_1^2-K_2^2((2d_1+d_2)^2+12d_2)}{K_1+K_2\sqrt{(2d_1+d_2)^2+12d_2}},
\end{split}
\end{equation}
where
\begin{equation}
\begin{split}
K_0&=5 d_2-2d_1 - \sqrt{(d_2+2d_1)^2+12d_2},\\
K_1&=(4 d_1 - 4) d_2^4 + (24 d_1^2 + 60 d_1 + 56) d_2^3 + (48 d_1^3 + 144 d_1^2 - 20 d_1 - 88) d_2^2 \\
&\quad + (32 d_1^4 + 16 d_1^3 + 16 d_1^2 - 8 d_1 - 32) d_2 - 16 d_1^3 - 16 d_1^2>0,\\
K_2&=(4 d_1 - 4) d_2^3 + (16 d_1^2 + 44 d_1 + 16) d_2^2 + (16 d_1^3 + 8 d_1^2 - 36 d_1 - 32) d_2 + 8 d_1^2 + 8 d_1>0.
\end{split}
\end{equation}
The numerator of $\tilde{\omega}_2(\kappa_*)$ is 
\begin{equation}
\begin{split}
&K_1^2-K_2^2((2d_1+d_2)^2+12d_2)\\
&=64 d_2  (2 d_2 -2 d_1-1)^3((d_1 - 1) d_2^3 + (5 d_1^2 + 14 d_1 + 8) d_2^2 + (8 d_1^3 + 16 d_1^2 - 8 d_1 - 16) d_2 + 4 d_1^4 - 4 d_1^2).
\end{split}
\end{equation}
If $d_1=d_2$, both $K_1^2-K_2^2((2d_1+d_2)^2+12d_2)$ and $K_0$ are negative. If $d_1\leq d_2-1$, both  $K_1^2-K_2^2((2d_1+d_2)^2+12d_2)$ and $K_0$ are positive. We conclude that $\tilde{\omega}_2(\kappa_*)>0$. Therefore, we have $\omega_2(k)<0$ for each $k\in\left[-\frac{d_1}{d_2},0\right)$.
\end{proof}

\begin{proposition}
\label{prop_omega0}
For any $d_2\geq d_1 \geq 2$, we have  $\omega_0> 0$ for each $k\in\left[-\frac{d_1}{d_2},1\right]$.
\end{proposition}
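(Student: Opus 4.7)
The plan is to adapt the substitution argument used in the proof of Proposition~\ref{prop_omega2}. Since \cite[Proposition~4.8]{chi2024existence} already yields $\omega_0 > 0$ on $[0, 1]$, I only need to establish positivity on the remaining interval $\left[-\frac{d_1}{d_2}, 0\right)$. To this end, I would substitute $k = -\frac{d_1}{d_2}\kappa$ with $\kappa \in [0, 1]$ and show that the resulting expression is positive on this parameter range.

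After clearing denominators by multiplying through by $d_2/d_1^2$, the function $\omega_0\!\left(-\frac{d_1}{d_2}\kappa\right)$ becomes a reduced quadratic $\tilde\omega_0(\kappa) = A'\kappa^2 + B'\kappa + C'$ in $\kappa$, whose coefficients depend polynomially on $d_1, d_2$. A direct calculation yields the boundary values $\tilde\omega_0(0) = C' = (d_1-1)d_2^2 + 4 d_2 > 0$ and $\tilde\omega_0(1) = d_2 + 2 d_1 > 0$, so the quadratic is positive at both endpoints.

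The key observation is the algebraic identity $B' + 2A' = -2(d_1+1)d_2$, which I would verify by direct comparison of the explicit formulas for $A'$ and $B'$. This identity handles the remaining cases cleanly. If $A' > 0$, the vertex of the parabola lies at $\kappa_v = -B'/(2A') = 1 + (d_1+1)d_2/A' > 1$, so $\tilde\omega_0$ is monotonically decreasing on $[0,1]$ and its minimum on the interval is the positive value $\tilde\omega_0(1) = d_2 + 2 d_1$. If $A' \leq 0$, the quadratic is concave-down (or linear), hence its minimum on $[0, 1]$ is attained at one of the two endpoints, both of which are positive. In either case, $\tilde\omega_0(\kappa) > 0$ on $[0, 1]$, which completes the proof.

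The anticipated obstacle is purely computational: verifying the identity $B' + 2A' = -2(d_1+1)d_2$ and the two boundary values requires careful bookkeeping of explicit coefficients from the expression of $\omega_0$ in the appendix. No analytic or structural difficulty is expected, and the argument ultimately reduces to a short case split on the sign of the leading coefficient $A'$.
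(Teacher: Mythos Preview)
Your argument is correct, and the core content is the same as the paper's: your identity $B'+2A'=-2(d_1+1)d_2$ is, after undoing the substitution $k=-\tfrac{d_1}{d_2}\kappa$ and the scaling by $d_2/d_1^2$, precisely the statement $\omega_0'\!\left(-\tfrac{d_1}{d_2}\right)=2d_1d_2(d_1+1)>0$, and your endpoint checks $\tilde\omega_0(1)=2d_1+d_2$ and $\tilde\omega_0(0)=(d_1-1)d_2^2+4d_2$ are the paper's evaluations $\omega_0\!\left(-\tfrac{d_1}{d_2}\right)=\tfrac{d_1^2(2d_1+d_2)}{d_2}$ and $\omega_0(0)=d_1^2((d_1-1)d_2+4)$ in disguise.

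The paper's execution is more economical. It treats the full interval $\left[-\tfrac{d_1}{d_2},1\right]$ at once: from $\omega_0'\!\left(-\tfrac{d_1}{d_2}\right)>0$ and the fact that $\omega_0$ is quadratic, the function either increases throughout or increases then decreases on the interval, so its minimum is attained at one of the two endpoints; both are then checked to be positive (the right endpoint being $\omega_0(1)=d_2(n-1)(d_1n-n-d_1)$). This avoids the substitution, the appeal to \cite[Proposition~4.8]{chi2024existence} for the subinterval $[0,1]$, and the case split on the sign of $A'$. Your route works, but each of these steps is a detour that the direct derivative argument renders unnecessary.
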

\begin{proof}
Since 
$$ \frac{d\omega_0}{dk}\left(-\frac{d_1}{d_2}\right)=2d_1d_2(d_1+1)>0,$$ the function $\omega_0$ either monotonically increases on $\left[-\frac{d_1}{d_2},1\right]$, or it increases and then decreases on the interval. 
Since 
$$\omega_0 \left(-\frac{d_1}{d_2}\right)=\frac{d_1^2(n+d_1)}{d_2}>0,\quad \omega_0(1)=d_2(n-1)(d_1n-n-d_1)>0,$$
the positivity of $\omega_0$ on $\left[-\frac{d_1}{d_2},1\right]$ is established.
\end{proof}

\begin{proposition}
\label{prop_PX+QX is negative}
The polynomial $P_X+Q_X$ is negative on $\left[-\frac{d_1}{d_2},1\right]$
\end{proposition}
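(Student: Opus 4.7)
The plan is to reduce the inequality to a cubic sign question in the single variable $k$ and attack it by endpoint evaluation plus a critical-point analysis in the interior. Set
$$\Phi(k) := d_1^2(n-1)\bigl(P_X(k) + Q_X(k)\bigr).$$
Adding \eqref{eqn_PX} and \eqref{eqn_QX} and clearing the common positive denominator $d_1^2(n-1)$, the quantity $\Phi$ is a cubic polynomial in $k$ whose coefficients are polynomials in $d_1, d_2$. The proposition is therefore equivalent to $\Phi(k) < 0$ for every $k \in [-d_1/d_2, 1]$.

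Step one is endpoint evaluation. Proposition \ref{prop_px is negative somewhere} gives $P_X(1) = 0$, and a direct expansion of \eqref{eqn_QX} at $k = 1$ using $d_1 + d_2 = n$ yields
$$\Phi(1) = d_1^2(n-1)Q_X(1) = -(n-1) d_2\bigl(d_1(d_1-2) + d_2(d_1-1)\bigr) < 0$$
for $d_1, d_2 \geq 2$. At the opposite endpoint, the relation $d_1 + d_2 k = 0$ collapses several terms in both \eqref{eqn_PX} and \eqref{eqn_QX}, producing
$$\Phi(-d_1/d_2) = -\frac{d_1^3 \bigl((2d_1+1) n + d_2\bigr)}{d_2^2} < 0.$$

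Step two is the interior analysis. The derivative $\Phi'(k)$ is a quadratic in $k$, so $\Phi$ has at most two critical points. I would first compute $\Phi'(-d_1/d_2)$ and $\Phi'(1)$ to determine whether $\Phi'$ changes sign on the interval; if it does not, then $\Phi$ is monotone and the endpoint values close the proof. Otherwise, the quadratic formula locates the critical points as explicit algebraic functions of $d_1, d_2$, and substituting back into $\Phi$ and clearing one square root reduces the problem to a polynomial inequality in $d_1, d_2$ that can be verified on $d_2 \geq d_1 \geq 2$. A cleaner route if available would be a manifestly nonpositive decomposition
$$\Phi(k) = -(1-k)\,\alpha(k) - (d_1 + d_2 k)\,\beta(k) - \gamma,$$
with $\alpha, \beta \geq 0$ on the interval and $\gamma > 0$, motivated by the factorization $P_X = (1-k)\tilde P_X/(d_1(n-1))$ and the fact that $1-k$ and $d_1 + d_2 k$ are precisely the ``distance to endpoint'' linear factors of $[-d_1/d_2, 1]$.

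The main obstacle is the interior step: a cubic negative at both endpoints of a closed interval can still take positive values in between (local maximum above zero, local minimum below), so critical-point bounds cannot be avoided. The template of Proposition \ref{prop_omega2}, where a similar cubic is treated by the substitution $k = -d_1\kappa/d_2$ followed by explicit critical-point evaluation and clearing of a single surd, suggests the direct route is workable; I expect any residual positivity after clearing radicals to follow immediately from $d_2 \geq d_1 \geq 2$.
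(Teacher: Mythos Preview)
Your endpoint evaluations at $k=1$ and $k=-d_1/d_2$ are correct, but the proposal is not a proof: the interior step is only sketched (``I would first compute\dots'', ``a cleaner route if available would be\dots''). You correctly identify that a cubic negative at both endpoints can still be positive in between, and you propose two routes---critical-point analysis after the quadratic formula, or a manifestly nonpositive decomposition---but you execute neither. Until one of them is carried out, the argument has a genuine gap precisely at the point you flag as the main obstacle.

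The paper sidesteps the critical-point computation altogether and lands on something close to your second suggestion. It splits the interval: the case $k\in[0,1]$ is quoted from \cite[Proposition~5.1]{chi2024existence}, and for $k\in[-d_1/d_2,0]$ the substitution $k=-\tfrac{d_1}{d_2}\kappa$ with $\kappa\in[0,1]$ is made, after which
\[
-\frac{d_2^2(n-1)}{d_1}\bigl(P_X+Q_X\bigr)
\]
is rewritten as a sum of five terms, each a product of a coefficient that is nonnegative for $d_2\ge d_1\ge 2$ with one of the basis polynomials $\kappa^3$, $\kappa^2-\kappa^3$, $\kappa^2$, $\kappa-\kappa^3$, $1-\kappa$, all nonnegative on $[0,1]$ and not vanishing simultaneously. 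So the ``manifestly nonpositive decomposition'' you were hoping for does exist, but on the negative half-interval only and in a Bernstein-type basis in $\kappa$ rather than in your proposed factors $1-k$ and $d_1+d_2k$. If you want a self-contained argument without the external citation, you would still need either to find an analogous decomposition on $[0,1]$ or to carry out the critical-point bound there.
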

\begin{proof}
By \cite[Proposition 5.1]{chi2024existence}, we have $P_X+Q_X<0$ on $[0,1]$. We proceed to consider the function on $\left[-\frac{d_1}{d_2},0\right]$. Consider $k=-\frac{d_1}{d_2}\kappa$ and $P_X+Q_X$ as a polynomial on $\kappa\in [0,1]$. We have 
\begin{equation}
\begin{split}
-\frac{d_2^2(n-1)}{d_1}(P_X+Q_X)
%&=(- d_1^2  d_2 -  d_1  d_2^2 + 4  d_1^2 + 3  d_1  d_2 +  d_2^2 -  d_1 -  d_2) \kappa^3\\
%&\quad  + ( d_1  d_2^2 - 2  d_1^2 + 2  d_1  d_2 -  d_2^2 + 2  d_1 +  d_2) \kappa^2 \\
%&\quad + ( d_1^2  d_2 -  d_1  d_2^2 - 3  d_1  d_2 +  d_2^2 + 2  d_2) \kappa +  d_1  d_2^2 -  d_2^2\\
&= (4  d_1^2+2 d_2)\kappa^3 +( d_1  d_2^2  -  d_2^2 +  d_1 +  d_2)(\kappa^2-\kappa^3)\\
&\quad + (2  d_1  d_2- 2  d_1^2   +  d_1 ) \kappa^2 + ( d_1^2  d_2  - 3  d_1  d_2+2d_2) (\kappa-\kappa^3)\\
&\quad  +  (d_1  d_2^2 -  d_2^2)(1-\kappa).
\end{split}
\end{equation}
Given $d_2\geq d_1\geq 2$, all terms above are non-negative and do not vanish simultaneously for any $\kappa\in[0,1]$. Therefore, we have $P_X+Q_X< 0$ on $\left[-\frac{d_1}{d_2},1\right]$. The proof is complete.
\end{proof}

\begin{proposition}
\label{prop_property of Omega}
On the interval $(0,1)$, the function $\Omega_{d_1,d_2}$
\begin{enumerate}
\item
increases then decreases if $(d_1,d_2)=(2,2)$;
\item
decreases then increases if $d_1=2,3$, $d_2\geq 3$;
\item
monotonically increases if $d_1\geq 4$.
\end{enumerate}
\end{proposition}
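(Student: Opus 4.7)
The plan is to analyze $\Omega'_{d_1,d_2}(k)$ via logarithmic differentiation. By Propositions \ref{prop_omega2} and \ref{prop_omega0}, the polynomials $\omega_0$ and $-\omega_2$ are positive on $[-d_1/d_2,1]$; the linear factor $p(k):=2d_1+d_2k$ is positive on $[0,1]$, and $\omega_1(0)=4d_1^2(d_1-1)>0$. On any subinterval of $(0,1)$ on which $\omega_1$ is nonvanishing,
$$\frac{\Omega'_{d_1,d_2}(k)}{\Omega_{d_1,d_2}(k)}=\frac{2\omega_1'(k)}{\omega_1(k)}-\frac{d_2}{p(k)}-\frac{\omega_0'(k)}{\omega_0(k)}-\frac{\omega_2'(k)}{\omega_2(k)}.$$
Clearing denominators, the sign of $\Omega'_{d_1,d_2}$ is controlled by the polynomial
$$\Phi(k):=2\omega_1'(k)\,p(k)\,\omega_0(k)\,\omega_2(k)-\omega_1(k)\bigl[d_2\omega_0(k)\omega_2(k)+p(k)\omega_0'(k)\omega_2(k)+p(k)\omega_0(k)\omega_2'(k)\bigr],$$
together with the sign of $\omega_1$. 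Hence the monotonicity of $\Omega_{d_1,d_2}$ is governed by the joint zero set of $\Phi$ and $\omega_1$ on $(0,1)$ along with appropriate endpoint data.

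Next, from the explicit formulas one computes
$$\frac{\Omega_{d_1,d_2}(0)}{\Omega_{d_1,d_2}(1)}=\frac{4(d_1-1)(2d_1+d_2)}{d_1^2\bigl((d_1-1)d_2+4\bigr)},$$
which equals $1$ when $d_1=2$ and is strictly less than $1$ for $d_1\geq 3$ (using $d_2\geq d_1$). Direct substitution in the formulas of the appendix yields the anticipated endpoint signs of $\Omega'_{d_1,d_2}$: $\Omega'(0)>0$ and $\Omega'(1)<0$ for $(d_1,d_2)=(2,2)$; $\Omega'(0)<0$ and $\Omega'(1)>0$ for $d_1\in\{2,3\}$ with $d_2\geq 3$; $\Omega'(0)>0$ and $\Omega'(1)>0$ for $d_1\geq 4$. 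In each case the endpoint signs are compatible with the claimed monotonicity pattern, provided $\Omega'_{d_1,d_2}$ admits at most one sign change on $(0,1)$.

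Establishing this at-most-one-zero property is the main obstacle, and it reduces to a polynomial root-count for $\Phi$ (plus accounting for any zero of $\omega_1$ in $(0,1)$, at which $\Omega\propto\omega_1^2$ vanishes to order two and forces a local minimum of $\Omega$). Writing $\Phi(k)=\sum_i c_i(d_1,d_2)\,k^i$ from the cubic factors $\omega_j$ in the appendix, the strategy is to pin down the sign of each coefficient $c_i$ under $d_2\geq d_1\geq 2$ and then apply a Descartes-rule-of-signs argument separating the three cases. An alternative is to isolate a common linear factor $(k-k_\bullet)$ corresponding to the unique interior critical point and show that the remaining factor of $\Phi$ has definite sign on $(0,1)$. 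The algebra is intensive but elementary; the exceptional low-dimensional cases (for instance $d_1,d_2\leq 4$) may be resolved by direct numerical or symbolic verification on a finite list.
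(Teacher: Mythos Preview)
Your outline correctly identifies that everything reduces to understanding the sign changes of $\Omega'_{d_1,d_2}$ on $(0,1)$, and your endpoint checks are (modulo some boundary cases where the derivative actually vanishes, e.g.\ $k=0$ for $(d_1,d_2)=(2,2)$ and for $d_1=4$) consistent with the claim. However, the proof stops precisely where the real work begins: you never establish the ``at most one sign change'' property. Writing $\Phi(k)=\sum_i c_i(d_1,d_2)k^i$ and invoking Descartes' rule is not a proof until the signs of the $c_i$ are actually determined, and for a degree-$8$ polynomial in $k$ with coefficients that are themselves polynomials in $(d_1,d_2)$ this is a substantial computation that you have not carried out. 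The alternative you mention---factoring out a linear $(k-k_\bullet)$---presupposes knowing $k_\bullet$ in closed form, which you do not have. As written, the argument is a plan, not a proof.

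The paper closes exactly this gap, and does so by a step you are missing: the numerator of $\Omega'_{d_1,d_2}$ factors as
\[
\frac{d\Omega_{d_1,d_2}}{dk}=\frac{d_2^2(d_2-1)^2}{(2d_1+d_2k)^2(d_1-1)\,\omega_0^2\,\omega_2^2}\,\alpha_1\alpha_2\alpha_3\alpha_4,
\]
with $\alpha_1=\omega_1$ cubic, $\alpha_2$ and $\alpha_3$ quadratic, and $\alpha_4$ cubic (explicit formulas in the appendix). One then shows $\alpha_1,\alpha_2>0$ on $[0,1]$ for all $d_2\ge d_1\ge 2$; that $\alpha_3>0$ on $[0,1]$ when $d_1\ge 4$ but has exactly one root in $(0,1)$ when $d_1\in\{2,3\}$; and that $\alpha_4>0$ on $(0,1)$ whenever $(d_1,d_2)\neq(2,2)$, while $\alpha_4=48k(k^2-1)\le 0$ for $(2,2)$. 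The three monotonicity patterns then drop out immediately from the product $\alpha_3\alpha_4$. Note in particular that $\alpha_1=\omega_1>0$ on $[0,1]$, so your contingency about interior zeros of $\omega_1$ never arises. This factorization is the decisive idea; without it (or an equivalent simplification), a bare Descartes-rule attack on $\Phi$ is not a realistic route to the conclusion.
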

\begin{proof}
Compute the derivative  we obtain
\begin{equation}
\frac{d\Omega_{d_1,d_2}}{dk}=\frac{d_2^2(d_2-1)^2}{(2d_1+d_2k)^2(d_1-1)\omega_0^2\omega_2^2}\alpha_1\alpha_2\alpha_3\alpha_4,
\end{equation}
where
\begin{equation}
\begin{split}
\alpha_1&=(d_1d_2^3 - 4d_1d_2^2 - d_2^3 + 3d_2^2)k^3 + (2d_1^2d_2^2 - 8d_1^2d_2 + 8d_1d_2 - 2d_2^2)k^2\\
&\quad  + (d_1^3d_2 - 4d_1^3 + 5d_1^2d_2 + 4d_1^2 - 6d_1d_2)k + 4d_1^3 - 4d_1^2,\\
\alpha_2&=d_2(d_1-1)(d_2-1)k^2-d_1d_2k^2 + d_1(2d_1 d_2 - 2d_1 - 2 d_2 + 2)k + d_1^3 - d_1^2\\
%&=(d_1d_2^2 - 2d_1d_2 - d_2^2 + d_2)k^2 + (2d_1^2d_2 - 2d_1^2 - 2d_1d_2 + 2d_1)k + d_1^3 - d_1^2,\\
\alpha_3&=(d_1d_2^2 + d_1d_2 - d_2^2)k^2 + (2d_1^2d_2 + 2d_1^2 - 4d_1d_2)k + d_1^3 - 4d_1^2,\\
\alpha_4&=(-d_1^3d_2^3 + d_1^2d_2^4 + 4d_1^3d_2^2 - 2d_1^2d_2^3 - 2d_1d_2^4 - d_1^2d_2^2 + 4d_1d_2^3 + d_2^4 - d_1d_2^2 - d_2^3)k^3\\
&\quad + (-2d_1^4d_2^2 + 4d_1^3d_2^3 + 8d_1^4d_2 - 8d_1^3d_2^2 - 8d_1^2d_2^3 - 8d_1^3d_2 + 14d_1^2d_2^2 + 4d_1d_2^3 - 4d_1d_2^2)k^2\\
&\quad + (-d_1^5d_2 + 5d_1^4d_2^2 + 4d_1^5 - 10d_1^4d_2 - 10d_1^3d_2^2 - 8d_1^4 + 17d_1^3d_2 + 5d_1^2d_2^2 + 4d_1^3 - 6d_1^2d_2)k \\
&\quad + 2d_1^5d_2 - 4d_1^5 - 4d_1^4d_2 + 8d_1^4 + 2d_1^3d_2 - 4d_1^3.\\
\end{split}
\end{equation}

We consider the sign of each factor in the following.
\begin{enumerate}
\item
$\alpha_1$\\
From straightforward computations on the six cases where $4\geq d_2\geq d_1\geq 2$, we have 
\begin{align*}
(d_1,d_2)=(2,2):& \quad \alpha_1=-12k^3-8k^2+16k+16\\
(d_1,d_2)=(2,3):& \quad \alpha_1=-18k^3+6k^2+32k+16\\
(d_1,d_2)=(2,4):& \quad \alpha_1=-16k^3+32k^2+48k+16\\
(d_1,d_2)=(3,2):& \quad \alpha_1=-27k^3+90k+72\\
(d_1,d_2)=(3,3):& \quad \alpha_1=-16k^3+64k^2+144k+72\\
(d_1,d_2)=(4,4):& \quad \alpha_1=-16k^3+96k^2+288k+192
\end{align*}
Therefore, we have $\alpha_1>0$ on $[0,1]$ for these cases. All coefficients of $\alpha_1$ are positive if $d_2\geq 5$. Hence, the factor $\alpha_1$ is positive on $[0,1]$ for all $d_2\geq d_1\geq 2$.
\item
$\alpha_2$\\
For $\alpha_2$, we have
\begin{equation}
\label{eqn_beta_2a>0}
\begin{split}
\alpha_2&\geq -d_1d_2k^2 + d_1(2d_1 d_2 - 2d_1 - 2 d_2 + 2)k + d_1^3 - d_1^2\\
&>  d_1(2d_1 d_2 - 2d_1 - 3 d_2 + 2)k\\
&\geq 0,
\end{split}
\end{equation}
\item
$\alpha_3$\\
The coefficients of $k^2$ and $k$ in $\alpha_3$ are positive. Therefore, we have $\alpha_3\geq 0$ if $d_1\geq 4$. 
 If $d_1\in \{2,3\}$, we have 
\begin{align*}
d_1=2:&\quad \alpha_3=(d_2^2 + 2d_2)k^2 + 8k -8,\\
d_1=3:&\quad \alpha_3=(2d_2^2 + 3d_2)k^2 + (6 d_2 +18)k -9,
\end{align*}
which both have a real root in $(0,1)$ for any $d_2\geq 2$.

\item
$\alpha_4$\\
From straightforward computations on the three cases where $3\geq d_2\geq d_1\geq 2$, we have 
\begin{align*}
(d_1,d_2)=(2,2):& \quad \alpha_4=48k^3-48k\leq 0\\
(d_1,d_2)=(2,3):& \quad \alpha_4=72k^3-24k^2-28k+16>0\\
(d_1,d_2)=(3,3):& \quad \alpha_4=270k^3+216k^2+108k+216>0
\end{align*}
on $[0,1]$.
We consider $d_2\geq 4$ in the following.
\begin{enumerate}
\item
The coefficient for $k^3$ in $\alpha_4$ is
\begin{equation}
\begin{split}
&-d_1^3d_2^3 + d_1^2d_2^4 + 4d_1^3d_2^2 - 2d_1^2d_2^3 - 2d_1d_2^4 - d_1^2d_2^2 + 4d_1d_2^3 + d_2^4 - d_1d_2^2 - d_2^3\\
&=d_2^2((d_1-1)^2d_2^2-(d_1^3+2d_1^2-4d_1+1)d_2+(4d_1^3-d_1^2-d_1))
\end{split}
\end{equation}
Consider the second factor above as a quadratic function of $d_2$. If $d_1\leq 4$, the discriminant of the quadratic function
$$
d_1^6 - 12 d_1^5 + 32 d_1^4 - 34 d_1^3 + 16 d_1^2 - 4 d_1 + 1
$$
is negative. Hence, the coefficient of $k^3$ in $\alpha_4$ is positive for this case. If $d_1\geq 5$, the second factor above monotonically increases if $d_2\geq d_1$. As the factor is positive if $d_2=d_1$, we know that the factor is also positive for any $d_2\geq d_1\geq 5$. In summary, the coefficient for $k^3$ in $\alpha_4$ is positive for any $d_2\geq d_1\geq 2$.
\item
The coefficient for $k^2$ in $\alpha_4$ is
\begin{equation}
\begin{split}
&-2d_1^4d_2^2 + 4d_1^3d_2^3 + 8d_1^4d_2 - 8d_1^3d_2^2 - 8d_1^2d_2^3 - 8d_1^3d_2 + 14d_1^2d_2^2 + 4d_1d_2^3 - 4d_1d_2^2\\
&=2d_1d_2(d_1-1)(2(d_1-1)d_2^2-(d_1^2+5d_1-2)d_2+4d_1^2)\\
\end{split}
\end{equation}
As a function of $d_2$, the last factor above is positive if $d_1=2$ and $d_2\geq 4$. The last factor above monotonically increases if $d_2\geq d_1\geq 3$. As the factor is positive if $d_2=d_1\geq 3$, we know that the factor is also positive for any $d_2\geq d_1\geq 3$. In summary, the coefficient of $k^2$ in $\alpha_4$ is positive for any $d_2\geq 4$ and $d_2\geq d_1$.
\item
The coefficient for $k$ in $\alpha_4$ is positive since
\begin{equation}
\begin{split}
&-d_1^5d_2 + 5d_1^4d_2^2 + 4d_1^5 - 10d_1^4d_2 - 10d_1^3d_2^2 - 8d_1^4 + 17d_1^3d_2 + 5d_1^2d_2^2 + 4d_1^3 - 6d_1^2d_2\\
&=d_1^2(d_1-1)(5(d_1-1)d_2^2+(d_1^2+11d_1-6)d_2-4d_1(d_1-1))\\
&\geq d_1^2(d_1-1)(4(d_1^2+11d_1-6)-4d_1(d_1-1))\\
&=d_1^2(d_1-1)(48d_1-24).
\end{split}
\end{equation}
\item
The constant term in $\alpha_4$ is non-negative since
\begin{equation}
\begin{split}
&2d_1^5d_2 - 4d_1^5 - 4d_1^4d_2 + 8d_1^4 + 2d_1^3d_2 - 4d_1^3=2d_1^3(d_1-1)^2(d_2-2)> 0
\end{split}
\end{equation}
\end{enumerate}
Therefore, the factor $\alpha_4> 0$ if $d_2\geq d_1\geq 2$ and $(d_1,d_2)\neq (2,2)$.
\end{enumerate}
\end{proof}

%\begin{table}[H]
%\centering
%\begin{tabular}{|l|l|l|l|l|l|l|l|}
%\hline
%$\mathsf{K}$&$\mathsf{H}$&$\mathsf{G}$& $d_1$& $d_2$& $\alpha$&$A$&$\Psi_{d_1,d_2}$\\
%\hline
%\hline
%&&&&&&&\\
%$Sp(2)U(1)$& $U(4)$ & $SU(5)$ & $5$ & $8$ & $\frac{4}{5}$&$\frac{49}{50}=0.98$& $\frac{186494}{198025}\approx 0.94$\\[2ex]
%
%$\spin(7)$ & $\spin(8)$ & $\spin(9)$ &$7$ & $8$&$\frac{6}{7}$&$\frac{1}{2}$& $\frac{8879}{20886}\approx 0.43$\\[2ex]
%
%$G_2\times SO(2)$ & $\spin(7)SO(2)$ &$\spin(9)$& $7$ & $14$ & $\frac{5}{7}$&$\frac{507}{196}\approx 2.59$& $\frac{11}{6}\approx 1.83$\\[2ex]
%
%$\spin(11)Sp(1)$ & $\spin(12)Sp(1)$ & $E_7$ & $11$ & $64$ & $\frac{5}{9}$& $49$ & $\frac{26823819708}{1214772845}\approx 22.08$ \\[2ex]
%
%$\spin(15)$ & $\spin(16)$ & $E_8$& $15$ & $128$ & $\frac{7}{15}$ & $\frac{32258}{225}\approx 143.37$ &$\frac{28882022881}{576131150}\approx 50.13$\\[2ex]
%\hline
%\end{tabular}
%\caption{}
%\label{table: Non-existence}
%\end{table}

\textbf{Acknowledgement.}
The author would like to thank the conference organizers of ``Einstein Spaces and Special Geometry'' at the Institut Mittag-Leffler in July 2023, where part of this work was presented. The author thanks Christoph B\"ohm, Andrew Dancer, Lorenzo Foscolo, and Claude LeBrun for insightful discussions during the conference. The author also thanks McKenzie Wang and Xiping Zhu for valuable comments on an early draft of this paper.

\bibliography{BIB}

\begin{thebibliography}{CDS15}

\bibitem[B{\"o}h98]{bohm_inhomogeneous_1998}
C.~B{\"o}hm.
\newblock Inhomogeneous {{Einstein}} metrics on low-dimensional spheres and
  other low-dimensional spaces.
\newblock {\em Inventiones Mathematicae}, 134(1):145--176, 1998.

\bibitem[B{\"o}h99]{bohm1999non}
C.~B{\"o}hm.
\newblock Non-existence of cohomogeneity one {E}instein metrics.
\newblock {\em Mathematische Annalen}, 314:109--125, 1999.

\bibitem[B{\"o}h05]{bohm2005non}
C.~B{\"o}hm.
\newblock Non-existence of homogeneous {E}instein metrics.
\newblock {\em Commentarii Mathematici Helvetici}, 80(1):123--146, 2005.

\bibitem[CDS15]{chen2015kahler}
X.~Chen, S.~Donaldson, and S.~Sun.
\newblock K{\"a}hler--{E}instein metrics on {F}ano manifolds, {I-III}.
\newblock {\em Journal of the American Mathematical Society}, 28(1):183--278,
  2015.

\bibitem[Chi25]{chi2024existence}
H.~Chi.
\newblock Existence of cohomogeneity one {E}instein metrics.
\newblock {\em arXiv preprint arXiv:2405.10836}, 2025.

\bibitem[DK08]{dickinson_geometry_2008}
W.~Dickinson and M.~M. Kerr.
\newblock The geometry of compact homogeneous spaces with two isotropy
  summands.
\newblock {\em Annals of Global Analysis and Geometry}, 34(4):329--350,
  November 2008.

\bibitem[Fut83]{futaki1983obstruction}
A.~Futaki.
\newblock An obstruction to the existence of {E}instein {K}{\"a}hler metrics.
\newblock {\em Inventiones mathematicae}, 73(3):437--443, 1983.

\bibitem[Hit74]{hitchin1974compact}
N.~Hitchin.
\newblock Compact four-dimensional {E}instein manifolds.
\newblock {\em Journal of Differential Geometry}, 9(3):435--441, 1974.

\bibitem[LeB96]{lebrun1996four}
C.~LeBrun.
\newblock Four-manifolds without {E}instein metrics.
\newblock {\em Mathematical Research Letters}, 3:133--147, 1996.

\bibitem[Lic58]{lichnerowicz1958geometrie}
A.~Lichnerowicz.
\newblock G{\'e}om{\'e}trie des groupes de transformations.
\newblock 1958.

\bibitem[Mat57]{matsushima1957structure}
Y.~Matsushima.
\newblock Sur la structure du groupe d’hom{\'e}omorphismes analytiques
  d’une certaine vari{\'e}t{\'e} {K}aehl{\'e}rinne.
\newblock {\em Nagoya Mathematical Journal}, 11:145--150, 1957.

\bibitem[O'N66]{o1966fundamental}
B.~O'Neill.
\newblock The fundamental equations of a submersion.
\newblock {\em Michigan Mathematical Journal}, 13(4):459--469, 1966.

\bibitem[PS97]{park1997invariant}
J.-S. Park and Y.~Sakane.
\newblock Invariant einstein metrics on certain homogeneous spaces.
\newblock {\em Tokyo Journal of Mathematics}, 20:51--62, 1997.

\bibitem[Tho69]{thorpe1969some}
J.~A. Thorpe.
\newblock Some remarks on the {G}auss--{B}onnet integral.
\newblock {\em Journal of Mathematics and Mechanics}, 18(8):779--786, 1969.

\bibitem[Tia87]{tian1987kahler}
G.~Tian.
\newblock On {K}{\"a}hler--{E}instein metrics on certain {K}{\"a}hler manifolds
  with ${C}_1({M})>0$.
\newblock {\em Inventiones mathematicae}, 89:225--246, 1987.

\bibitem[Tia15]{tian2015k}
G.~Tian.
\newblock K-stability and {K}{\"a}hler--{E}instein metrics.
\newblock {\em Communications on Pure and Applied Mathematics},
  68(7):1085--1156, 2015.

\bibitem[WZ86]{wang1986existence}
M.~Y. Wang and W.~Ziller.
\newblock Existence and non-existence of homogeneous {{Einstein}} metrics.
\newblock {\em Inventiones mathematicae}, 84(1):177--194, 1986.

\end{thebibliography}
\bibliographystyle{alpha}
\end{document}